\tikzset{individu/.style={draw,thick}}
\theoremstyle{plain}
\newtheorem{theorem}{Theorem}[section]
\newtheorem{corollary}[theorem]{Corollary}
\newtheorem{lemma}[theorem]{Lemma}
\newtheorem{proposition}[theorem]{Proposition}
\theoremstyle{definition}
\theoremstyle{remark}
\newtheorem{remark}[theorem]{Remark}
\numberwithin{equation}{section}
\newcommand{\N}{\mathbb{N}}
\newcommand{\Z}{\mathbb{Z}}
\newcommand{\R}{\mathbb{R}}
\newcommand{\ind}[1]{\mathbf{1}_{\left\{#1\right\}}}
\newcommand{\indset}[1]{\mathbf{1}_{#1}}
\renewcommand{\bar}[1]{\overline{#1}}
\renewcommand{\tilde}[1]{\widetilde{#1}}
\renewcommand{\hat}[1]{\widehat{#1}}
\newcommand{\dd}{\mathrm{d}}
\DeclareMathOperator{\E}{\mathbb{E}}
\newcommand{\Q}{\mathbb{Q}}
\renewcommand{\P}{\mathbb{P}}
\newcommand{\GW}{{{\normalfont\text{\tiny{GW}}}}}
\newcommand{\ks}{{\mathrm{k}_*}}
\renewcommand{\rho}{\varrho}
\renewcommand{\epsilon}{\varepsilon}
\title{Reinforced Galton-Watson processes II:\\ Large time behaviors }
\author{Jean Bertoin\thanks{Institute of Mathematics, University of Zurich, Switzerland.} \and Bastien Mallein\thanks{Institut de Math\'ematiques de Toulouse,  Universit\'e de Toulouse, France.}}
\date{}
\begin{document}

\maketitle

\begin{abstract}
Reinforced Galton-Watson processes have been introduced in \cite{BM} as population models with non-overlapping generations, such that reproduction events along genealogical lines can be repeated at random. We investigate here some of their sample path properties such as asymptotic growth rates and survival, for which the effects of reinforcement on the evolution appear  quite strikingly.
\end{abstract}

\noindent \emph{\textbf{Keywords:}} Galton-Watson process, stochastic reinforcement, growth rate, survival.

\medskip

\noindent \emph{\textbf{AMS subject classifications:}}  60J80; 60J85

\section{Introduction and main results}
\label{sec:introduction}

Reinforcement is a fundamental concept in many sciences, including  notably Behavioral Psychology (as a key part of Skinner's behavioral theory of learning \cite{Skinner}) and Artificial Intelligence \cite{Gro}, where this terminology refers to methods that increase the likelihood of certain evolutions in both natural and artificial systems.
It has been introduced in the setting of stochastic processes by Coppersmith and Diaconis in an influential unpublished article, where these authors modified step after step
the dynamics of a random walk on a graph, in such a way
that transitions which have already been often made in the past are more likely occur again in the future.

It is a recent development in Demography that a reinforcement feature can be detected in the genealogy of human populations. In many human populations, the fertility levels of parents and children are positively correlated \cite{BeauSol,Murphy}; this may be explained e.g. by socioeconomic, cultural or inherited factors. This observation, sometimes referred to as Intergenerational Transmission of Fertility, invalidates population models in which individuals reproduce independently one from the others, and provides an incentive for the study of reinforced versions which incorporate a dependency structure of  fertility levels along lineages.

The Galton-Watson branching process is a population model in which every individual reproduces independently of the others according a probability distribution $(\nu(k))_{k \geq 0}$; any individual has probability $\nu(k)$ of having $k$ children at the next generation. This population model was originally introduced to study the evolution of human demography, namely the possible disappearance of family names. It is well-known that this process almost surely dies out if and only if the mean number of children of a given individual satisfies $m_\GW := \sum k \nu(k) < 1$. We assume throughout this work that the reproduction law has bounded support, i.e. there exists $\ks \geq 2$ with $\nu(\ks) > 0$ such that $\boldsymbol{\nu} = (\nu(k))_{0 \leq k \leq \ks}$ is a probability vector. In other words, $\ks$ is the maximal possible number of children.

We recently  introduced in \cite{BM} a  reinforced version of Galton-Watson processes that  involves random repetitions of reproduction events and depends on a parameter $q\in(0,1)$. The evolution can be depicted as follows. Each individual at any generation $n\geq 1$ picks a forebear uniformly at random on its ancestral lineage, independently of the other individuals. Then either with probability $q$,  this individual begets the same number of children as the selected forebear, or with complementary probability $1-q$,  the number of its children  is  an independent sample from the reproduction law $\boldsymbol{\nu}$. Reproduction events that occurred at early stages of the process are thus more likely to be repeated in the future, as they are common to a large number of genealogical lines, and informally speaking, the reinforced process thus  keeps some memory of its past.  Clearly, the usual Galton-Watson  evolution corresponds to the boundary case $q=0$ without reinforcement (i.e. without memory).

Let $Z=(Z(n))_{n\geq 0}$ be a reinforced Galton-Watson process  with reproduction law $\boldsymbol{\nu}$ and reinforcement parameter $q\in(0,1)$, started from a single ancestor.
More precisely,  $Z(n)$ denotes the size of the $n$-th generation, and for $k \geq 1$, we write $\P_k$  for the distribution of $Z$ conditioned on $Z(1)=k$. Let us recall a simplified  version of our main result  about the asymptotic behavior of the averaged population size  for large generations; see \cite[Theorem 7.3]{BM}. As $n\to \infty$, we have
\begin{equation}\label{E:asympmoyen}
\E_{ \ks } (Z(n)) \sim    \frac{  m_{\boldsymbol{\nu},q}^{n}}{q+
(1-q) \nu( \ks )} ,
 \end{equation}
whereas
\begin{equation}\label{E:asympmoyen0}
\E_{\ell} (Z(n)) = o\left( m_{\boldsymbol{\nu},q}^{n}\right) \quad\text{for any $\ell< \ks $ with $\nu(\ell)>0$,}
 \end{equation}
 where
\begin{equation}\label{E:I}
m_{\boldsymbol{\nu},q}\coloneqq \frac{q}{\int_0^{1/ \ks } \Pi(t) \dd t},
\end{equation}
and
\begin{equation} \label{E:pi}
\Pi(t)\coloneqq  \prod_{k:\nu(k)>0} (1-tk)^{\nu(k)(1-q) /q},  \qquad0\leq  t\leq 1/ \ks .
\end{equation}

We now present the  motivations for the present work and our two main results. Recall that for the usual Galton-Watson process $Z_\GW$ with reproduction law
$\boldsymbol{\nu}$,  not only the mean population size at the $n$-th generation is\footnote{This has to be compared and contrasted with \eqref{E:asympmoyen} and \eqref{E:asympmoyen0};  recall also from  \cite[Proposition 7.1(i)]{BM} that
$m_\GW=\lim_{q\to 0+} m_{\boldsymbol{\nu},q}$.}
$$\E(Z_\GW(n)) = m_\GW^n\qquad \text{ for all }n\geq 1, $$
but in the supercritical case where $m_\GW>1$, there is the much more precise pathwise convergence
$$\lim_{n\to \infty} m_\GW^{-n} Z_\GW(n) = W_\GW \qquad\text{a.s. and in }L^1.$$
Furthermore, in the original Galton-Watson process, the random variable $W_\GW$ vanishes exactly on the event of eventual extinction of $Z_\GW$. Our first main result shows that reinforcement may change dramatically this property.

In this direction, we need to introduce some more notation. Consider the sub-process $Z_*=(Z_*(n))_{n\geq 0}$ of the reinforced Galton-Watson process which results by suppressing every progeny (together with its descent) that has  size strictly less than $ \ks $. Since by construction, all the forebears of an individual in this sub-process have exactly $ \ks $ children,  $Z_*$ is a true Galton-Watson process  under $\P_{ \ks }$ (recall that  the latter  denotes the distribution of the reinforced Galton-Watson process when the ancestor has the maximal number $\ks$ of children).
Specifically, all individuals in this sub-process, with the exception of the ancestor, have either $ \ks $ children with probability $q+(1-q)\nu( \ks )$, or $0$ child with complementary probability, independently one of each other's. We write
\begin{equation}\label{E:m*}
m_{*,q} \coloneqq \ks (q + (1-q)\nu( \ks ))
\end{equation}
for the mean reproduction number of $Z_*$ and stress that $m_{*,q}\leq m_{\boldsymbol{\nu},q}$; see \cite[Proposition 7.2]{BM}.

\begin{theorem}\label{T1} We have:

\begin{enumerate}

\item[(i)] If $m_{*,q}\leq 1$, then
$$\lim_{n\to\infty} m_{\boldsymbol{\nu},q}^{-n} Z(n)=0  \qquad \text{in $\P_{ \ks }$-probability.}$$

\item[(ii)] If $m_{*,q}>1$,
then
$$\lim_{n\to\infty} m_{\boldsymbol{\nu},q}^{-n} Z(n) = \lim_{n\to\infty} m_{*,q}^{-n} Z_*(n)\coloneqq W_*\qquad \text{in }L^1(\P_{ \ks }).$$
Moreover  the events $\{W_*=0\}$ and $\{\exists n\geq 1: Z_*(n)=0\}$ coincide $\P_{ \ks }$-a.s.
\end{enumerate}
\end{theorem}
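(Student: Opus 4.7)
The plan splits according to whether the Galton--Watson sub-process $Z_*$ is sub-/critical (case (i)) or supercritical (case (ii)). Set $c_\ks \coloneqq 1/(q+(1-q)\nu(\ks))$. Under $\P_\ks$, \eqref{E:asympmoyen} reads $\E_\ks[Z(n)]\sim c_\ks m_{\boldsymbol{\nu},q}^n$, while a direct computation gives $\E_\ks[Z_*(n)] = c_\ks m_{*,q}^n$ (since $Z_*$ starts with $\ks$ children and thereafter each individual has $\ks$ or $0$ children with probabilities $1/c_\ks$ and $1-1/c_\ks$). This matching of leading constants makes the identification of $L^1$-limits in (ii) consistent, and $\E_\ks[W_*] = c_\ks$.

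For part (i), one may assume $m_{\boldsymbol{\nu},q}>1$ (else the statement is trivial). Since $m_{*,q}\leq 1$, $Z_*$ dies out $\P_\ks$-a.s., so $Z_*(n)/m_{\boldsymbol{\nu},q}^n\to 0$ a.s. For the excess $Z(n)-Z_*(n)$, I would decompose each such individual by the generation $j$ of its most recent \emph{first-deviator} ancestor --- an individual $v$ with $\ell(v)<\ks$ children whose strict ancestors all had $\ks$ children. The expected number of first-deviators at generation $j$ equals $\ks(1-q)(1-\nu(\ks))\,\E_\ks[Z_*(j-1)]$, i.e.\ is of order $m_{*,q}^{j-1}$. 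If the expected size at generation $n$ of the subtree rooted at a first-deviator at generation $j$ can be shown to be $o(m_{\boldsymbol{\nu},q}^{n-j})$ uniformly in $j$ (an appropriate ``pre-history-dependent'' variant of \eqref{E:asympmoyen0}), then summing against $m_{*,q}^{j-1}$ and using $m_{*,q}\leq 1$ gives $\E_\ks[Z(n)-Z_*(n)]= o(m_{\boldsymbol{\nu},q}^n)$, and Markov's inequality yields the convergence to $0$ in $\P_\ks$-probability.

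For part (ii), the hypotheses force $m_{*,q}>1$, so $Z_*$ is supercritical with bounded offspring. Classical theory then gives $m_{*,q}^{-n}Z_*(n)\to W_*$ $\P_\ks$-a.s.\ and in $L^p$ for every $p$, with $\{W_*=0\}=\{\exists n:\,Z_*(n)=0\}$ $\P_\ks$-a.s. It remains to show $m_{\boldsymbol{\nu},q}^{-n}Z(n)\to W_*$ in $L^1(\P_\ks)$. Since the means match, the most direct route is $L^2$-convergence: expand $\Var_{\P_\ks}\bigl(m_{\boldsymbol{\nu},q}^{-n}Z(n) - m_{*,q}^{-n}Z_*(n)\bigr)$ by decomposing $Z(n)$ through the population at an intermediate generation $j$, separating spine contributions (from $Z_*(j)$) from non-spine ones, and estimating the covariances between subtrees rooted at that generation. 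The dominant variance terms, weighted by the expected number of pairs of common ancestors, should turn out to be summable in $j$ precisely when $m_{\boldsymbol{\nu},q}<m_{*,q}^2$, which accounts for the sharpness of this threshold.

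The main obstacle, common to both parts, is the non-independence of subtrees in the reinforced process: reinforcement couples each descendant's reproduction to the entire shared ancestral history, so subtrees rooted at siblings are correlated, and \eqref{E:asympmoyen}--\eqref{E:asympmoyen0} cannot be applied to subtrees verbatim. Carrying out the plan therefore requires refinements of these asymptotics uniform over the rooted individual's pre-history, together with covariance bounds sharp enough for the variance computation in (ii) and precise enough to match the limiting constant so that the limit is exactly $W_*$ rather than merely proportional to it.
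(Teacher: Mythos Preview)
Your proposal for part~(i) contains a genuine error: the target conclusion $\E_\ks[Z(n)-Z_*(n)]=o(m_{\boldsymbol{\nu},q}^n)$ is simply false. By \eqref{E:asympmoyen} one has $\E_\ks[Z(n)]\sim c_\ks\, m_{\boldsymbol{\nu},q}^n$, while $\E_\ks[Z_*(n)]=c_\ks\, m_{*,q}^n\leq c_\ks$ when $m_{*,q}\leq 1$; hence $\E_\ks[Z(n)-Z_*(n)]\sim c_\ks\, m_{\boldsymbol{\nu},q}^n$, and a naked Markov inequality on this quantity cannot yield convergence to~$0$. Correspondingly, the uniformity in~$j$ that you postulate must fail: a first-deviator at generation~$j$ carries $j$ ancestral $\ks$-reproductions in its history, and the subtree estimate depends on this full history (already for $j$ close to~$n$ the ratio of the subtree mean to $m_{\boldsymbol{\nu},q}^{n-j}$ does not tend to zero). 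The paper's fix is not to bound $\E_\ks[Z(n)-Z_*(n)]$ at all, but to truncate: fix a large generation~$\ell$, use a.s.\ extinction of $Z_*$ to make $\P_\ks(Z_*(\ell)\geq 1)$ arbitrarily small, and observe that on $\{Z_*(\ell)=0\}$ every individual at generation~$\ell$ carries one of finitely many \emph{non-maximal} types, so that a type-dependent refinement of \eqref{E:asympmoyen0} gives $\E_\ks\bigl[Z(n+\ell)\,\mathbf{1}_{\{Z_*(\ell)=0\}}\bigr]=o(m_{\boldsymbol{\nu},q}^n)$. Markov's inequality is then applied to this \emph{truncated} expectation.

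For part~(ii) your outline is in the right spirit: you correctly locate the threshold $m_{\boldsymbol{\nu},q}<m_{*,q}^2$ in a second-moment computation and correctly flag the correlation of subtrees as the obstacle. The paper resolves the latter by recasting the reinforced process as a multitype branching process, with an individual's type equal to the empirical measure of its ancestral offspring counts; this restores a genuine branching property (conditionally independent subtrees given types) and supplies exactly the ``pre-history-dependent'' refinements of \eqref{E:asympmoyen}--\eqref{E:asympmoyen0} you ask for. With this in hand, the paper does not compute $\Var\bigl(m_{\boldsymbol{\nu},q}^{-n}Z(n)-m_{*,q}^{-n}Z_*(n)\bigr)$ directly, but uses a Scheff\'e-type argument: decompose at a fixed generation~$\ell$, bound $Z(n+\ell)$ from \emph{below} by the i.i.d.\ contributions of the $Z_*(\ell)$ maximal-type individuals, control the centred fluctuations of those contributions in $L^2$ (this is precisely where $m_{\boldsymbol{\nu},q}<m_{*,q}^2$ enters, via a geometric series in the second-moment bound), and match means via \eqref{E:asympmoyen}. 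Your direct $L^2$ route would require the same multitype machinery and comparable moment bounds, so it offers no real shortcut.
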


\begin{remark}
Comparing Theorem~\ref{T1}(i) with \eqref{E:asympmoyen} shows that the main contribution to the mean population size $\E_{ \ks } (Z(n))$ when $m_{*,q}\leq 1$ is actually due to exceptional events for which  the population is much larger than expected, i.e. $Z(n) \gg m_{\boldsymbol{\nu},q}^{n}$. When $m_{*,q}>1$, this is no longer the case. However, we expect the survival event of $Z$ to be strictly larger than $\{W_* = 0\}$.
\end{remark}

\begin{remark}
Let $(\nu(k))_{k \geq 0}$ be a probability distribution on $\Z_+$ with unbounded support and $q > 0$. If we define a reinforced Galton-Watson process $Z$ with these parameters, then for all $k \in \Z_+$, we can define $Z^{(k)}$  counting individuals in the reinforced process such that none of their ancestor had $k$ ancestors or more. Setting
\[
  \forall 1 \leq j \leq k, \nu^{(k)}(j) = \nu(j) \quad \text{ and } \quad \nu^{(k)}(0) = 1 - \nu([1,k]),
\]
we observe that $Z^{(k)}$ is a $(\boldsymbol{\nu}^{(k)},q)$ reinforced Galton-Watson process, and that $m_{\boldsymbol{\nu}^{{(k)}},q} \geq m_{*,q} \geq kq$. As a result, we deduce from Theorem \ref{T1} that for all $A > 0$, we have $\liminf_{n \to \infty} Z_n/A^n  > 0$ with positive probability, and $(Z_n)$ grows super-exponentially fast. This justifies our choice of only considering compactly supported measures $\boldsymbol{\nu}$.
\end{remark}

Our second main result concerns survival probabilities. For usual Galton-Watson processes,  $Z_\GW$ survives with strictly positive probability if and only
it is super-critical, that is $m_\GW> 1$. In the reinforced setting, Theorem~\ref{T1}(i) entails that $Z$ becomes eventually extinct a.s. whenever $m_{\boldsymbol{\nu},q}\leq 1$; however
we conjectured in \cite{BM} that there should exist reproduction laws $\boldsymbol{\nu}$ and reinforcement parameters $q$ such that $m_{\boldsymbol{\nu},q}> 1$ and nonetheless,  the reinforced Galton-Watson process becomes extinct eventually almost surely.

In the converse direction, observe first that if $q \ks  \geq 1$, then  the  true Galton-Watson process $Z_*$ is supercritical, and the reinforced Galton-Watson process $Z$ obviously survives with strictly positive probability. In particular, observe that if $\boldsymbol{\nu}$ had unbounded support, then the reinforced Galton-Watson process would always survive with positive probability. Assuming now that $q \ks < 1$, we point at the following  sufficient condition for survival of $Z$.

\begin{theorem}\label{T2} Suppose that $q \ks  <  1$, and that
\begin{equation}  \label{eqn:supercritical}
  \sum_{j=1}^{ \ks } \frac{(1-q)j\nu(j)}{1-qj} > 1.
  \end{equation}
Then the reinforced Galton-Watson process survives forever with strictly positive probability, that is for any $\ell \geq 1$ with $\nu(\ell)>0$, we have

$$
  \P_{\ell} (Z(n)\geq 1 \text{ for all }n\geq 1) > 0.
$$
\end{theorem}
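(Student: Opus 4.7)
My plan is to identify condition \eqref{eqn:supercritical} as the supercriticality condition for an auxiliary multi-type Galton--Watson process $\tilde Z$, then couple it with a sub-process of $Z$ so as to transfer survival.

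Define $\tilde Z$ on the type space $\{j : \nu(j) > 0\}$: each type-$j$ individual has $j$ offspring, each independently of type $j'$ with probability $q\ind{j' = j} + (1-q)\nu(j')$. This is the ``parent-only memory'' analogue of $Z$: each child copies its direct parent with probability $q$ and otherwise samples freshly from $\boldsymbol{\nu}$. The mean offspring matrix is $M_{j,j'} = j[q\ind{j = j'} + (1-q)\nu(j')]$; its off-diagonal part has rank one, so the eigenvector equation $Mv = \lambda v$ reduces to $v_j(\lambda - qj) = (1-q) j \sum_{j'} \nu(j') v_{j'}$, and pairing with $\nu$ yields the characteristic equation
\[
\sum_{j : \nu(j) > 0} \frac{(1-q)j\nu(j)}{\lambda - qj} = 1.
\]
At $\lambda = 1$ the left-hand side is exactly the left-hand side of \eqref{eqn:supercritical}, which by hypothesis exceeds $1$; since the LHS is strictly decreasing in $\lambda$ on $(q\ks, +\infty)$ (nonempty thanks to $q\ks < 1$), the Perron eigenvalue $\lambda^* > 1$. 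Thus $\tilde Z$ is supercritical and survives with strictly positive probability.

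The main remaining task is to compare $\tilde Z$ with a sub-process of $Z$. A naive pointwise coupling fails: in $Z$ the contribution of any single forebear to a child's copy probability is only $q/(k+1)$ at depth $k+1$, much less than the fixed $q$ present in $\tilde Z$. The key point is that in $Z$ it is the \emph{aggregate} copy contribution of all forebears sharing a given reproduction $j$, governed by the empirical frequency of $j$-reproducing forebears along the lineage, that produces the reinforcement; along a size-biased spine this frequency equilibrates via Polya-urn-type dynamics (as analysed in \cite{BM}) to a limit under which the effective rates match those of $\tilde Z$.

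Concretely, I would pursue one of two routes. Route (a): use the many-to-one formula of \cite{BM} to realize a size-biased spine whose empirical reproduction profile converges to the Perron eigenvector of $M$, then extract from this a positive martingale whose non-degeneracy yields survival via a Paley--Zygmund-type argument. Route (b): exploit the copy-chain decomposition of $Z$ --- each individual $v$ traces back, via a Geometric$(1-q)$ chain of copies, to a ``fresh ancestor'' with $\boldsymbol{\nu}$-distributed reproduction --- to define a branching structure on fresh individuals which, after a depth correction, stochastically dominates $\tilde Z$. The principal obstacle in either route is controlling the error terms arising from depth-dependence and the non-Markovian type dynamics of $Z$; I expect the moment asymptotics of \cite{BM}, and in particular the integral representation of $m_{\boldsymbol{\nu},q}$ via $\Pi(t)$, to supply the required bounds.
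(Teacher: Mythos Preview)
Your eigenvalue computation is correct and indeed matches the paper's Lemma~5.5: the characteristic equation $\sum_j (1-q)j\nu(j)/(\lambda-qj)=1$ governs both the mean matrix of your $\tilde Z$ and the generalized P\'olya urn that the paper uses, and \eqref{eqn:supercritical} is exactly $\lambda_1>1$. The paper also notes (Remark~(iii) after Theorem~\ref{T2}) the interpretation via $\tilde Z$. So the diagnostic part of your proposal is on target.

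The gap is that you have written a plan, not a proof. Neither Route~(a) nor Route~(b) is carried out, and the passage ``I expect the moment asymptotics of \cite{BM} \ldots\ to supply the required bounds'' is precisely where the work lies. Route~(b) in particular is too vague: it is unclear what branching structure on ``fresh'' individuals you intend, and the claimed stochastic domination by $\tilde Z$ after a ``depth correction'' is not substantiated --- you yourself note that naive coupling fails, and nothing in the sketch explains why the aggregate correction restores domination rather than merely matching first moments.

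Route~(a) is essentially the paper's strategy, but several concrete steps are missing from your outline. The paper does not extract a martingale \emph{after} studying the spine; it first writes down an explicit mean-harmonic function $\Phi(u)=\prod_{j<|u|} 1/m_{\mathbf t(p_j(u))}$ and the additive martingale $M_n=\sum_{|u|=n}\Phi(u)$, then tilts by $M$ and performs a spinal decomposition. The spine's type process is then identified with a generalized P\'olya urn (balls coloured by $\mathcal C\cup\{\star\}$, with activities $qk$ and $(1-q)m_\GW$), and Janson's urn limit theorems give $m_{\mathbf t(\varsigma(n))}\to\lambda_1$, hence $\Phi(\varsigma(n))\to 0$ geometrically when $\lambda_1>1$. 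Uniform integrability of $M$ then follows from Durrett's criterion $\bar{\mathcal P}(M_\infty<\infty)=1$, not from Paley--Zygmund; the conditional expectation of $M_n$ given the spine is bounded by $\ks\sum_k \Phi(\varsigma(k))<\infty$. None of this is in your write-up, and the urn identification in particular is not something one can simply ``expect'' --- it requires a specific construction (the $\star$-colour device) to turn the size-biased spine dynamics into a P\'olya urn with tractable replacement matrix.
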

\begin{remark} \begin{itemize}
\item[(i)]  The condition \eqref{eqn:supercritical} obviously holds whenever there exists some $j\leq \ks$ such that
$$ \frac{(1-q)j\nu(j)}{1-qj} > 1 .$$
This inequality can be written as $j(q+(1-q)\nu(j)) >1$, and we observe that the left-hand side is the mean reproduction number of the true Galton-Watson sub-process that  results from the reinforced one by suppressing every progeny (together with its descent) with  size different from $j$. That is, the  true Galton-Watson sub-process is supercritical, hence it survives with strictly positive probability. \textit{A fortiori} the same holds for the reinforced Galton-Watson process.

\item[(ii)] In the same vein,  \eqref{eqn:supercritical} also clearly holds when  the usual Galton-Watson process $Z_{\GW}$ is supercritical, viz. $m_{\GW}>1$.
In the converse direction, it is easy to construct reproductions laws $\boldsymbol{\nu}$ with
$$ m_\GW  < 1 < \sum_{j=1}^{ \ks } \frac{(1-q)j\nu(j)}{1-qj}.$$
This provides further examples of reinforced process $Z$ that may survive in situations where usual Galton-Watson process $Z_\GW$ becomes eventually extinct a.s.

\item[(iii)] The sufficient condition \eqref{eqn:supercritical} corresponds exactly to the necessary and sufficient condition for the survival of the branching process $\tilde{Z}$ in which parents beget children according to the law $\boldsymbol{\nu}$ with probability $1-q$, or beget exactly as many children as their own parent with probability $q$. The process $\tilde{Z}$ being a multitype Galton-Watson process, this claim  is easily checked by studying the Perron-Frobenius eigenvalue of the mean reproduction matrix $(q j \delta_{i,j} +(1-q)j\nu(j))_{i,j}$, where $\delta_{\cdot,\cdot}$ is the Kronecker symbol, see \cite[Chapter 4]{AN}. \end{itemize}
\end{remark}

Theorem~\ref{T1} implies that $m_{\boldsymbol{\nu},q} > 1$ is a necessary condition for survival. Theorem~\ref{T2} gives a different sufficient condition for survival, namely \eqref{eqn:supercritical}.  As a consequence, \eqref{eqn:supercritical} entails $m_{\boldsymbol{\nu},q} > 1$; however, we have not been able to check this fact by  purely analytic considerations. Despite these two conditions being quite close to one another, the two are not identical as is being illustrated in Figure~\ref{fig:phaseDiagram}.
\begin{figure}[h]
\centering
\begin{tikzpicture}[xscale=0.8,yscale=1.4]
  \draw[->] (-0.1,0) -- (5.9,0) node[below] {$p$};
  \draw[->] (0,-0.1) -- (0,3) node[left] {$q$};
  \draw (-0.1,2.5) node[left] {$0.25$} -- (0.1,2.5);
  \draw (5,-0.1) node[below] {$0.1$} -- (5,0.1);
  \draw (4,2) node {survival};
  \draw (1.5,0.6) node {extinction};
  \fill[color=black!20] (0.0,2.495) -- (0.02,2.49) -- (0.04,2.4825) -- (0.06,2.4775) -- (0.08,2.4725) -- (0.1,2.465) -- (0.12,2.46) -- (0.14,2.4525) -- (0.16,2.4475) -- (0.18,2.44) -- (0.2,2.435) -- (0.22,2.43) -- (0.24,2.4225) -- (0.26,2.4175) -- (0.28,2.41) -- (0.3,2.405) -- (0.32,2.3975) -- (0.34,2.3925) -- (0.36,2.385) -- (0.38,2.38) -- (0.4,2.3725) -- (0.42,2.3675) -- (0.44,2.36) -- (0.46,2.355) -- (0.48,2.3475) -- (0.5,2.3425) -- (0.52,2.335) -- (0.54,2.33) -- (0.56,2.3225) -- (0.58,2.3175) -- (0.6,2.31) -- (0.62,2.3025) -- (0.64,2.2975) -- (0.66,2.29) -- (0.68,2.285) -- (0.7,2.2775) -- (0.72,2.27) -- (0.74,2.265) -- (0.76,2.2575) -- (0.78,2.2525) -- (0.8,2.245) -- (0.82,2.2375) -- (0.84,2.2325) -- (0.86,2.225) -- (0.88,2.2175) -- (0.9,2.2125) -- (0.92,2.205) -- (0.94,2.1975) -- (0.96,2.1925) -- (0.98,2.185) -- (1.0,2.1775) -- (1.02,2.1725) -- (1.04,2.165) -- (1.06,2.1575) -- (1.08,2.15) -- (1.1,2.145) -- (1.12,2.1375) -- (1.14,2.13) -- (1.16,2.1225) -- (1.18,2.1175) -- (1.2,2.11) -- (1.22,2.1025) -- (1.24,2.095) -- (1.26,2.0875) -- (1.28,2.0825) -- (1.3,2.075) -- (1.32,2.0675) -- (1.34,2.06) -- (1.36,2.0525) -- (1.38,2.045) -- (1.4,2.04) -- (1.42,2.0325) -- (1.44,2.025) -- (1.46,2.0175) -- (1.48,2.01) -- (1.5,2.0025) -- (1.52,1.995) -- (1.54,1.9875) -- (1.56,1.98) -- (1.58,1.9725) -- (1.6,1.965) -- (1.62,1.9575) -- (1.64,1.95) -- (1.66,1.9425) -- (1.68,1.935) -- (1.7,1.9275) -- (1.72,1.92) -- (1.74,1.9125) -- (1.76,1.905) -- (1.78,1.8975) -- (1.8,1.89) -- (1.82,1.8825) -- (1.84,1.875) -- (1.86,1.8675) -- (1.88,1.86) -- (1.9,1.8525) -- (1.92,1.845) -- (1.94,1.835) -- (1.96,1.8275) -- (1.98,1.82) -- (2.0,1.8125) -- (2.02,1.805) -- (2.04,1.7975) -- (2.06,1.7875) -- (2.08,1.78) -- (2.1,1.7725) -- (2.12,1.765) -- (2.14,1.7575) -- (2.16,1.7475) -- (2.18,1.74) -- (2.2,1.7325) -- (2.22,1.7225) -- (2.24,1.715) -- (2.26,1.7075) -- (2.28,1.6975) -- (2.3,1.69) -- (2.32,1.6825) -- (2.34,1.6725) -- (2.36,1.665) -- (2.38,1.6575) -- (2.4,1.6475) -- (2.42,1.64) -- (2.44,1.63) -- (2.46,1.6225) -- (2.48,1.6125) -- (2.5,1.605) -- (2.52,1.595) -- (2.54,1.5875) -- (2.56,1.5775) -- (2.58,1.57) -- (2.6,1.56) -- (2.62,1.5525) -- (2.64,1.5425) -- (2.66,1.535) -- (2.68,1.525) -- (2.7,1.515) -- (2.72,1.5075) -- (2.74,1.4975) -- (2.76,1.4875) -- (2.78,1.48) -- (2.8,1.47) -- (2.82,1.46) -- (2.84,1.45) -- (2.86,1.4425) -- (2.88,1.4325) -- (2.9,1.4225) -- (2.92,1.4125) -- (2.94,1.4025) -- (2.96,1.3925) -- (2.98,1.385) -- (3.0,1.375) -- (3.02,1.365) -- (3.04,1.355) -- (3.06,1.345) -- (3.08,1.335) -- (3.1,1.325) -- (3.12,1.315) -- (3.14,1.305) -- (3.16,1.295) -- (3.18,1.285) -- (3.2,1.275) -- (3.22,1.265) -- (3.24,1.2525) -- (3.26,1.2425) -- (3.28,1.2325) -- (3.3,1.2225) -- (3.32,1.2125) -- (3.34,1.2) -- (3.36,1.19) -- (3.38,1.18) -- (3.4,1.1675) -- (3.42,1.1575) -- (3.44,1.1475) -- (3.46,1.135) -- (3.48,1.125) -- (3.5,1.1125) -- (3.52,1.1025) -- (3.54,1.09) -- (3.56,1.08) -- (3.58,1.0675) -- (3.6,1.0575) -- (3.62,1.045) -- (3.64,1.0325) -- (3.66,1.0225) -- (3.68,1.01) -- (3.7,0.9975) --  (3.72,0.985) --  (3.74,0.9725) --  (3.76,0.9625) --  (3.78,0.95) --  (3.8,0.9375) --  (3.82,0.925) --  (3.84,0.9125) --  (3.86,0.9) --  (3.88,0.8875) --  (3.9,0.875) --  (3.92,0.86) --  (3.94,0.8475) --  (3.96,0.835) --  (3.98,0.8225) --  (4.0,0.81) --  (4.02,0.795) --  (4.04,0.7825) --  (4.06,0.7675) --  (4.08,0.755) --  (4.1,0.74) --  (4.12,0.7275) --  (4.14,0.7125) --  (4.16,0.7) --  (4.18,0.685) --  (4.2,0.67) --  (4.22,0.6575) --  (4.24,0.6425) --  (4.26,0.6275) --  (4.28,0.6125) --  (4.3,0.5975) --  (4.32,0.5825) --  (4.34,0.5675) --  (4.36,0.5525) --  (4.38,0.5375) --  (4.4,0.5225) --  (4.42,0.505) --  (4.44,0.49) --  (4.46,0.475) --  (4.48,0.4575) --  (4.5,0.4425) --  (4.52,0.425) --  (4.54,0.41) --  (4.56,0.3925) --  (4.58,0.375) --  (4.6,0.36) --  (4.62,0.3425) --  (4.64,0.325) --  (4.66,0.3075) -- (4.66,0.325) --  (4.64,0.3425) --  (4.62,0.36) --  (4.6,0.3775) --  (4.58,0.395) --  (4.56,0.4125) --  (4.54,0.43) --  (4.52,0.4475) --  (4.5,0.465) --  (4.48,0.4825) --  (4.46,0.5) --  (4.44,0.5175) --  (4.42,0.535) --  (4.4,0.55) --  (4.38,0.5675) --  (4.36,0.585) --  (4.34,0.6) --  (4.32,0.6175) --  (4.3,0.6325) --  (4.28,0.65) --  (4.26,0.665) --  (4.24,0.6825) --  (4.22,0.6975) --  (4.2,0.7125) --  (4.18,0.73) --  (4.16,0.745) --  (4.14,0.76) --  (4.12,0.775) --  (4.1,0.7925) --  (4.08,0.8075) --  (4.06,0.8225) --  (4.04,0.8375) --  (4.02,0.8525) --  (4.0,0.8675) --  (3.98,0.8825) --  (3.96,0.8975) --  (3.94,0.91) --  (3.92,0.925) --  (3.9,0.94) --  (3.88,0.955) --  (3.86,0.97) --  (3.84,0.9825) --  (3.82,0.9975) --  (3.8,1.0125) --  (3.78,1.025) --  (3.76,1.04) --  (3.74,1.0525) --  (3.72,1.0675) --  (3.7,1.08) --  (3.68,1.095) --  (3.66,1.1075) --  (3.64,1.1225) --  (3.62,1.135) --  (3.6,1.1475) --  (3.58,1.16) --  (3.56,1.175) --  (3.54,1.1875) --  (3.52,1.2) --  (3.5,1.2125) --  (3.48,1.225) --  (3.46,1.24) --  (3.44,1.2525) --  (3.42,1.265) --  (3.4,1.2775) --  (3.38,1.29) --  (3.36,1.3025) --  (3.34,1.3125) --  (3.32,1.325) --  (3.3,1.3375) --  (3.28,1.35) --  (3.26,1.3625) --  (3.24,1.375) --  (3.22,1.385) --  (3.2,1.3975) --  (3.18,1.41) --  (3.16,1.42) --  (3.14,1.4325) --  (3.12,1.445) --  (3.1,1.455) --  (3.08,1.4675) --  (3.06,1.4775) --  (3.04,1.49) --  (3.02,1.5) --  (3.0,1.5125) --  (2.98,1.5225) --  (2.96,1.5325) --  (2.94,1.545) --  (2.92,1.555) --  (2.9,1.565) --  (2.88,1.5775) --  (2.86,1.5875) --  (2.84,1.5975) --  (2.82,1.6075) --  (2.8,1.6175) --  (2.78,1.6275) --  (2.76,1.64) --  (2.74,1.65) --  (2.72,1.66) --  (2.7,1.67) --  (2.68,1.68) --  (2.66,1.69) --  (2.64,1.7) --  (2.62,1.71) --  (2.6,1.7175) --  (2.58,1.7275) --  (2.56,1.7375) --  (2.54,1.7475) --  (2.52,1.7575) --  (2.5,1.765) --  (2.48,1.775) --  (2.46,1.785) --  (2.44,1.795) --  (2.42,1.8025) --  (2.4,1.8125) --  (2.38,1.8225) --  (2.36,1.83) --  (2.34,1.84) --  (2.32,1.8475) --  (2.3,1.8575) --  (2.28,1.865) --  (2.26,1.875) --  (2.24,1.8825) --  (2.22,1.8925) --  (2.2,1.9) --  (2.18,1.9075) --  (2.16,1.9175) --  (2.14,1.925) --  (2.12,1.9325) --  (2.1,1.94) --  (2.08,1.95) --  (2.06,1.9575) --  (2.04,1.965) --  (2.02,1.9725) --  (2.0,1.98) --  (1.98,1.99) --  (1.96,1.9975) --  (1.94,2.005) --  (1.92,2.0125) --  (1.9,2.02) --  (1.88,2.0275) --  (1.86,2.035) --  (1.84,2.0425) --  (1.82,2.05) --  (1.8,2.0575) --  (1.78,2.0625) --  (1.76,2.07) --  (1.74,2.0775) --  (1.72,2.085) --  (1.7,2.0925) --  (1.68,2.1) --  (1.66,2.105) --  (1.64,2.1125) --  (1.62,2.12) --  (1.6,2.125) --  (1.58,2.1325) --  (1.56,2.14) --  (1.54,2.145) --  (1.52,2.1525) --  (1.5,2.16) --  (1.48,2.165) --  (1.46,2.1725) --  (1.44,2.1775) --  (1.42,2.185) --  (1.4,2.19) --  (1.38,2.1975) --  (1.36,2.2025) --  (1.34,2.2075) --  (1.32,2.215) --  (1.3,2.22) --  (1.28,2.225) --  (1.26,2.2325) --  (1.24,2.2375) --  (1.22,2.2425) --  (1.2,2.25) --  (1.18,2.255) --  (1.16,2.26) --  (1.14,2.265) --  (1.12,2.27) --  (1.1,2.275) --  (1.08,2.2825) --  (1.06,2.2875) --  (1.04,2.2925) --  (1.02,2.2975) --  (1.0,2.3025) --  (0.98,2.3075) --  (0.96,2.3125) --  (0.94,2.3175) --  (0.92,2.3225) --  (0.9,2.3275) --  (0.88,2.3325) --  (0.86,2.3375) --  (0.84,2.3425) --  (0.82,2.345) --  (0.8,2.35) --  (0.78,2.355) --  (0.76,2.36) --  (0.74,2.365) --  (0.72,2.37) --  (0.7,2.3725) --  (0.68,2.3775) --  (0.66,2.3825) --  (0.64,2.385) --  (0.62,2.39) --  (0.6,2.395) --  (0.58,2.3975) --  (0.56,2.4025) --  (0.54,2.4075) --  (0.52,2.41) --  (0.5,2.415) --  (0.48,2.42) --  (0.46,2.4225) --  (0.44,2.4275) --  (0.42,2.43) --  (0.4,2.435) --  (0.38,2.4375) --  (0.36,2.4425) --  (0.34,2.445) --  (0.32,2.4475) --  (0.3,2.4525) --  (0.28,2.455) --  (0.26,2.46) --  (0.24,2.4625) --  (0.22,2.465) --  (0.2,2.47) --  (0.18,2.4725) --  (0.16,2.475) --  (0.14,2.48) --  (0.12,2.4825) --  (0.1,2.485) --  (0.08,2.49) --  (0.06,2.4925) --  (0.04,2.495) --  (0.02,2.4975) -- cycle;
  \draw [color=blue,thick] (0.0,2.495) --  (0.02,2.49) --  (0.04,2.4825) --  (0.06,2.4775) --  (0.08,2.4725) --  (0.1,2.465) --  (0.12,2.46) --  (0.14,2.4525) --  (0.16,2.4475) --  (0.18,2.44) --  (0.2,2.435) --  (0.22,2.43) --  (0.24,2.4225) --  (0.26,2.4175) --  (0.28,2.41) --  (0.3,2.405) --  (0.32,2.3975) --  (0.34,2.3925) --  (0.36,2.385) --  (0.38,2.38) --  (0.4,2.3725) --  (0.42,2.3675) --  (0.44,2.36) --  (0.46,2.355) --  (0.48,2.3475) --  (0.5,2.3425) --  (0.52,2.335) --  (0.54,2.33) --  (0.56,2.3225) --  (0.58,2.3175) --  (0.6,2.31) --  (0.62,2.3025) --  (0.64,2.2975) --  (0.66,2.29) --  (0.68,2.285) --  (0.7,2.2775) --  (0.72,2.27) --  (0.74,2.265) --  (0.76,2.2575) --  (0.78,2.2525) --  (0.8,2.245) --  (0.82,2.2375) --  (0.84,2.2325) --  (0.86,2.225) --  (0.88,2.2175) --  (0.9,2.2125) --  (0.92,2.205) --  (0.94,2.1975) --  (0.96,2.1925) --  (0.98,2.185) --  (1.0,2.1775) --  (1.02,2.1725) --  (1.04,2.165) --  (1.06,2.1575) --  (1.08,2.15) --  (1.1,2.145) --  (1.12,2.1375) --  (1.14,2.13) --  (1.16,2.1225) --  (1.18,2.1175) --  (1.2,2.11) --  (1.22,2.1025) --  (1.24,2.095) --  (1.26,2.0875) --  (1.28,2.0825) --  (1.3,2.075) --  (1.32,2.0675) --  (1.34,2.06) --  (1.36,2.0525) --  (1.38,2.045) --  (1.4,2.04) --  (1.42,2.0325) --  (1.44,2.025) --  (1.46,2.0175) --  (1.48,2.01) --  (1.5,2.0025) --  (1.52,1.995) --  (1.54,1.9875) --  (1.56,1.98) --  (1.58,1.9725) --  (1.6,1.965) --  (1.62,1.9575) --  (1.64,1.95) --  (1.66,1.9425) --  (1.68,1.935) --  (1.7,1.9275) --  (1.72,1.92) --  (1.74,1.9125) --  (1.76,1.905) --  (1.78,1.8975) --  (1.8,1.89) --  (1.82,1.8825) --  (1.84,1.875) --  (1.86,1.8675) --  (1.88,1.86) --  (1.9,1.8525) --  (1.92,1.845) --  (1.94,1.835) --  (1.96,1.8275) --  (1.98,1.82) --  (2.0,1.8125) --  (2.02,1.805) --  (2.04,1.7975) --  (2.06,1.7875) --  (2.08,1.78) --  (2.1,1.7725) --  (2.12,1.765) --  (2.14,1.7575) --  (2.16,1.7475) --  (2.18,1.74) --  (2.2,1.7325) --  (2.22,1.7225) --  (2.24,1.715) --  (2.26,1.7075) --  (2.28,1.6975) --  (2.3,1.69) --  (2.32,1.6825) --  (2.34,1.6725) --  (2.36,1.665) --  (2.38,1.6575) --  (2.4,1.6475) --  (2.42,1.64) --  (2.44,1.63) --  (2.46,1.6225) --  (2.48,1.6125) --  (2.5,1.605) --  (2.52,1.595) --  (2.54,1.5875) --  (2.56,1.5775) --  (2.58,1.57) --  (2.6,1.56) --  (2.62,1.5525) --  (2.64,1.5425) --  (2.66,1.535) --  (2.68,1.525) --  (2.7,1.515) --  (2.72,1.5075) --  (2.74,1.4975) --  (2.76,1.4875) --  (2.78,1.48) --  (2.8,1.47) --  (2.82,1.46) --  (2.84,1.45) --  (2.86,1.4425) --  (2.88,1.4325) --  (2.9,1.4225) --  (2.92,1.4125) --  (2.94,1.4025) --  (2.96,1.3925) --  (2.98,1.385) --  (3.0,1.375) --  (3.02,1.365) --  (3.04,1.355) --  (3.06,1.345) --  (3.08,1.335) --  (3.1,1.325) --  (3.12,1.315) --  (3.14,1.305) --  (3.16,1.295) --  (3.18,1.285) --  (3.2,1.275) --  (3.22,1.265) --  (3.24,1.2525) --  (3.26,1.2425) --  (3.28,1.2325) --  (3.3,1.2225) --  (3.32,1.2125) --  (3.34,1.2) --  (3.36,1.19) --  (3.38,1.18) --  (3.4,1.1675) --  (3.42,1.1575) --  (3.44,1.1475) --  (3.46,1.135) --  (3.48,1.125) --  (3.5,1.1125) --  (3.52,1.1025) --  (3.54,1.09) --  (3.56,1.08) --  (3.58,1.0675) --  (3.6,1.0575) --  (3.62,1.045) --  (3.64,1.0325) --  (3.66,1.0225) --  (3.68,1.01) --  (3.7,0.9975) --  (3.72,0.985) --  (3.74,0.9725) --  (3.76,0.9625) --  (3.78,0.95) --  (3.8,0.9375) --  (3.82,0.925) --  (3.84,0.9125) --  (3.86,0.9) --  (3.88,0.8875) --  (3.9,0.875) --  (3.92,0.86) --  (3.94,0.8475) --  (3.96,0.835) --  (3.98,0.8225) --  (4.0,0.81) --  (4.02,0.795) --  (4.04,0.7825) --  (4.06,0.7675) --  (4.08,0.755) --  (4.1,0.74) --  (4.12,0.7275) --  (4.14,0.7125) --  (4.16,0.7) --  (4.18,0.685) --  (4.2,0.67) --  (4.22,0.6575) --  (4.24,0.6425) --  (4.26,0.6275) --  (4.28,0.6125) --  (4.3,0.5975) --  (4.32,0.5825) --  (4.34,0.5675) --  (4.36,0.5525) --  (4.38,0.5375) --  (4.4,0.5225) --  (4.42,0.505) --  (4.44,0.49) --  (4.46,0.475) --  (4.48,0.4575) --  (4.5,0.4425) --  (4.52,0.425) --  (4.54,0.41) --  (4.56,0.3925) --  (4.58,0.375) --  (4.6,0.36) --  (4.62,0.3425) --  (4.64,0.325) --  (4.66,0.3075);
    \draw [color = orange, thick] (0.0,2.4975) --  (0.02,2.495) --  (0.04,2.4925) --  (0.06,2.49) --  (0.08,2.485) --  (0.1,2.4825) --  (0.12,2.48) --  (0.14,2.475) --  (0.16,2.4725) --  (0.18,2.47) --  (0.2,2.465) --  (0.22,2.4625) --  (0.24,2.46) --  (0.26,2.455) --  (0.28,2.4525) --  (0.3,2.4475) --  (0.32,2.445) --  (0.34,2.4425) --  (0.36,2.4375) --  (0.38,2.435) --  (0.4,2.43) --  (0.42,2.4275) --  (0.44,2.4225) --  (0.46,2.42) --  (0.48,2.415) --  (0.5,2.41) --  (0.52,2.4075) --  (0.54,2.4025) --  (0.56,2.3975) --  (0.58,2.395) --  (0.6,2.39) --  (0.62,2.385) --  (0.64,2.3825) --  (0.66,2.3775) --  (0.68,2.3725) --  (0.7,2.37) --  (0.72,2.365) --  (0.74,2.36) --  (0.76,2.355) --  (0.78,2.35) --  (0.8,2.345) --  (0.82,2.3425) --  (0.84,2.3375) --  (0.86,2.3325) --  (0.88,2.3275) --  (0.9,2.3225) --  (0.92,2.3175) --  (0.94,2.3125) --  (0.96,2.3075) --  (0.98,2.3025) --  (1.0,2.2975) --  (1.02,2.2925) --  (1.04,2.2875) --  (1.06,2.2825) --  (1.08,2.275) --  (1.1,2.27) --  (1.12,2.265) --  (1.14,2.26) --  (1.16,2.255) --  (1.18,2.25) --  (1.2,2.2425) --  (1.22,2.2375) --  (1.24,2.2325) --  (1.26,2.225) --  (1.28,2.22) --  (1.3,2.215) --  (1.32,2.2075) --  (1.34,2.2025) --  (1.36,2.1975) --  (1.38,2.19) --  (1.4,2.185) --  (1.42,2.1775) --  (1.44,2.1725) --  (1.46,2.165) --  (1.48,2.16) --  (1.5,2.1525) --  (1.52,2.145) --  (1.54,2.14) --  (1.56,2.1325) --  (1.58,2.125) --  (1.6,2.12) --  (1.62,2.1125) --  (1.64,2.105) --  (1.66,2.1) --  (1.68,2.0925) --  (1.7,2.085) --  (1.72,2.0775) --  (1.74,2.07) --  (1.76,2.0625) --  (1.78,2.0575) --  (1.8,2.05) --  (1.82,2.0425) --  (1.84,2.035) --  (1.86,2.0275) --  (1.88,2.02) --  (1.9,2.0125) --  (1.92,2.005) --  (1.94,1.9975) --  (1.96,1.99) --  (1.98,1.98) --  (2.0,1.9725) --  (2.02,1.965) --  (2.04,1.9575) --  (2.06,1.95) --  (2.08,1.94) --  (2.1,1.9325) --  (2.12,1.925) --  (2.14,1.9175) --  (2.16,1.9075) --  (2.18,1.9) --  (2.2,1.8925) --  (2.22,1.8825) --  (2.24,1.875) --  (2.26,1.865) --  (2.28,1.8575) --  (2.3,1.8475) --  (2.32,1.84) --  (2.34,1.83) --  (2.36,1.8225) --  (2.38,1.8125) --  (2.4,1.8025) --  (2.42,1.795) --  (2.44,1.785) --  (2.46,1.775) --  (2.48,1.765) --  (2.5,1.7575) --  (2.52,1.7475) --  (2.54,1.7375) --  (2.56,1.7275) --  (2.58,1.7175) --  (2.6,1.71) --  (2.62,1.7) --  (2.64,1.69) --  (2.66,1.68) --  (2.68,1.67) --  (2.7,1.66) --  (2.72,1.65) --  (2.74,1.64) --  (2.76,1.6275) --  (2.78,1.6175) --  (2.8,1.6075) --  (2.82,1.5975) --  (2.84,1.5875) --  (2.86,1.5775) --  (2.88,1.565) --  (2.9,1.555) --  (2.92,1.545) --  (2.94,1.5325) --  (2.96,1.5225) --  (2.98,1.5125) --  (3.0,1.5) --  (3.02,1.49) --  (3.04,1.4775) --  (3.06,1.4675) --  (3.08,1.455) --  (3.1,1.445) --  (3.12,1.4325) --  (3.14,1.42) --  (3.16,1.41) --  (3.18,1.3975) --  (3.2,1.385) --  (3.22,1.375) --  (3.24,1.3625) --  (3.26,1.35) --  (3.28,1.3375) --  (3.3,1.325) --  (3.32,1.3125) --  (3.34,1.3025) --  (3.36,1.29) --  (3.38,1.2775) --  (3.4,1.265) --  (3.42,1.2525) --  (3.44,1.24) --  (3.46,1.225) --  (3.48,1.2125) --  (3.5,1.2) --  (3.52,1.1875) --  (3.54,1.175) --  (3.56,1.16) --  (3.58,1.1475) --  (3.6,1.135) --  (3.62,1.1225) --  (3.64,1.1075) --  (3.66,1.095) --  (3.68,1.08) --  (3.7,1.0675) --  (3.72,1.0525) --  (3.74,1.04) --  (3.76,1.025) --  (3.78,1.0125) --  (3.8,0.9975) --  (3.82,0.9825) --  (3.84,0.97) --  (3.86,0.955) --  (3.88,0.94) --  (3.9,0.925) --  (3.92,0.91) --  (3.94,0.8975) --  (3.96,0.8825) --  (3.98,0.8675) --  (4.0,0.8525) --  (4.02,0.8375) --  (4.04,0.8225) --  (4.06,0.8075) --  (4.08,0.7925) --  (4.1,0.775) --  (4.12,0.76) --  (4.14,0.745) --  (4.16,0.73) --  (4.18,0.7125) --  (4.2,0.6975) --  (4.22,0.6825) --  (4.24,0.665) --  (4.26,0.65) --  (4.28,0.6325) --  (4.3,0.6175) --  (4.32,0.6) --  (4.34,0.585) --  (4.36,0.5675) --  (4.38,0.55) --  (4.4,0.535) --  (4.42,0.5175) --  (4.44,0.5) --  (4.46,0.4825) --  (4.48,0.465) --  (4.5,0.4475) --  (4.52,0.43) --  (4.54,0.4125) --  (4.56,0.395) --  (4.58,0.3775) --  (4.6,0.36) --  (4.62,0.3425) --  (4.64,0.325) --  (4.66,0.3075) --  (4.68,0.2875) --  (4.7,0.27) --  (4.72,0.2525) --  (4.74,0.2325) --  (4.76,0.215) --  (4.78,0.195) --  (4.8,0.1775) --  (4.82,0.1575) --  (4.84,0.1375) --  (4.86,0.12) --  (4.88,0.1) --  (4.9,0.08) --  (4.92,0.06) --  (4.94,0.04) --  (4.96,0.02) --  (4.98,0.0025) --  (5.0,0.0025);
\end{tikzpicture}
\caption{Phase diagram of a reinforced Galton-Watson process with parameter $(\boldsymbol{\nu}_p,q)$ with $\boldsymbol{\nu}_p = (1 - 4p)\delta_0 + p(\delta_1+\delta_2+\delta_3+\delta_4)$, for $q \in [0,0.25]$ and $p \in [0,0.1]$. The blue line corresponds to the set of parameters such that $m_{\boldsymbol{\nu}_p,q} = 1$, the orange one to parameters such that $\sum (1-q) j \nu_p(j)/(1-qj) = 1$.
 The grey domain corresponds to the set of parameters for which $m_{\boldsymbol{\nu}_p,q} > 1$ but \eqref{eqn:supercritical} does not hold. Note that $m_{\GW}>1$ for $p>0.1$ and that $q\ks=4q\geq 1$ for $q\geq 0.25$, which explains the boundary points of the curves.}
\label{fig:phaseDiagram}
\end{figure}
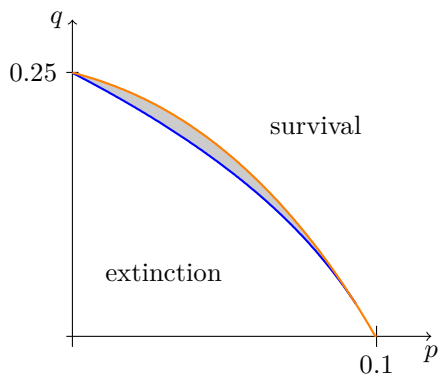

The rest of this work is organized as follows. Section~\ref{sec:const} dwells on the observation that reinforced Galton-Watson processes can be viewed as multi-type branching processes, where the type of an individual corresponds to the counting measure of the number of children of its forebears. This enables us to recover the branching property, of course at the cost of working with a rather large space of types. In Section~\ref{sec:bounds}, we establish some bounds for such multi-type branching processes, which follow rather directly from \eqref{E:asympmoyen} and \eqref{E:asympmoyen0}.
These bounds are then used in Section~\ref{sec:p1} to establish Theorem~\ref{T1}.

Section~\ref{sec:p2} is in turn devoted to the proof of Theorem~\ref{T2}; let us briefly sketch our approach which is fully independent of \cite{BM}. We start in Subsection~\ref{sec:mart}  by constructing a natural nonnegative martingale $M$ for the reinforced Galton-Watson process, which starts from $M_0=1$ and vanishes as soon as $Z$ becomes extinct. This leads us to investigate whether $M$ is uniformly integrable, since in that case, the reinforced Galton-Watson process obviously survives with strictly positive probability.  In this direction, we use $M$ to define a tilted probability law in Subsection~\ref{sec:spine}, and describe the evolution of the reinforced Galton-Watson process under the tilted law  in terms of the so-called  spinal decomposition. Next, in Subsection~\ref{sec:urns}, we interpret the evolution of the types along the spine in terms of a generalized P\'olya urn process. This permits us to determine the asymptotic behavior of  those types. The proof of Theorem~\ref{T2} can then be completed in Subsection~\ref{sec:reinforcedPerpetuity} by applying classical arguments involving the spinal decomposition and Durrett's criterion for the uniform integrability of a nonnegative martingale.

\section{Reinforced Galton-Watson processes as multi-type branching processes}
\label{sec:const}

The reproduction law of  an individual in a reinforced Galton-Watson process depends on its entire ancestral lineage. Different individuals may partly share the same ancestral lineage, and then their respective evolutions are not independent. The Markov and the branching properties are therefore lost in this process, but can nonetheless be recovered by endowing each individual with a type that records the reproduction numbers of its forebears.

To start with, we briefly adapt to our setting the  Ulam--Harris--Neveu framework which enables to encode any population model with non-overlapping generations  started from a single ancestor by its the genealogical tree $\mathcal T$. Since in our model, an individual has never more than $\ks$ children, we use
the space  of finite (possibly empty) sequences in $ \{1, \ldots, \ks\}$,
$$ \mathcal{U} \coloneqq
  \bigcup_{\ell \geq 0} \{1, \ldots, \ks\}^\ell,$$
as  the universal tree that contains $\mathcal T$.

The empty sequence $\varnothing$ is assigned to the ancestor of the population and viewed as the root of the genealogical tree $\mathcal{T}$.
 An individual at the $\ell$-th generation is represented by some ${u}=(u_1, \ldots, u_\ell)\in \mathcal{T}$, and if this individual has $k\geq 1$ children, the latter are represented in turn by the sequences ${u} j\coloneqq(u_1, \ldots, u_\ell,j)$ for $j=1, \ldots, k$. As in our setting, an individual can have $k$ children only for $k\geq 0$  such that $\nu(k)>0$,
 the outer-degree $d({u})$ of any individual ${u} \in \mathcal T$ always belongs to the support $\{k\geq 0: \nu(k)>0\}$ of the reproduction law $\boldsymbol{\nu}$.

We further assign types to individuals, that is to the vertices of the genealogical tree $\mathcal{T}$ as follows. The space of types is  the set  $\mathcal{M}_{\boldsymbol{\nu}}$ of finite integer-valued measures on $\{k\geq 1: \nu(k)>0\}$. Types are associated to vertices of $\mathcal{T}$ using the following (deterministic) algorithm. If an individual $u \in \mathcal{T}$ has type $\mathbf{t}$ and has $k \geq 1$ children at the next generation, then all its children receive the type $\mathbf{t} + \delta_k$. The assignation of types is thus completely determined once the type $\mathbf{t}(\varnothing)$ of the initial ancestor and the genealogical tree $\mathcal{T}$ are known.

The type of the ancestor may be either the null measure $\mathbf{0}$ or a measure of positive mass, $|\mathbf{t}(\varnothing)|=j\geq 1$.
In the latter case, we may imagine that the ancestor $\varnothing$ has also forebears in the past, at generations $-1, \ldots, -j$,
and then $\mathbf{t}(\varnothing)$ should be thought of as to the empirical measure of the numbers of children of the forebears of the ancestor.
In all cases,  the type $\mathbf{t}({u})$ of an individual ${u} \in \mathcal T$ corresponds to the empirical counting measure of the number of children of its forebears, possibly including forebears at negative generations when $\mathbf{t}(\varnothing)\neq \mathbf{0}$.

This setting enables us to view a reinforced Galton-Watson process as a multitype branching process; see \cite{BiK04} for background. Recall that $q \in (0,1)$ is a reinforcement parameter and $\boldsymbol{\nu}$ a probability vector on $\{0,1, \ldots, \ks \}$. We just need to specify the reproduction law $\boldsymbol{\pi}_{\mathbf t}$ of an individual as a function of its type $\mathbf{t}$.  First, when the type is the null measure (which can only be the case for the ancestor), we agree\footnote{Recall that in the first part of this work, we are essentially concerned with the law $\P_{\ks}$ when the ancestor has $\ks $ children, who all have type $\delta_{\ks}$. If we were working under law $\P_{\ell}$ for $1 \leq \ell \leq \ks$, then we would have imposed that an individual with the null type as exactly $\ell$ children all with type $\delta_\ell$.} that the individual has almost surely $\ks$ children all with type $\delta_\ks$, that is
$$\boldsymbol{\pi}_{\mathbf{0}}=\delta_{\ks}.$$
Next, when the type is non-zero, say $\mathbf{t}$, the probability that this individual begets no child equals
$${\pi}_{\mathbf t}(0)=(1-q)\nu(0),$$ and  the probability that it begets exactly $k\geq 1$ children all of type $\mathbf{t}+\delta_k$ and no further children equals
 $${\pi}_{\mathbf t}(k)=(1-q)\nu(k) + q\frac{{t}(k)}{|\mathbf{t}|},$$
where $|\mathbf{t}|$ stands for the total mass of $\mathbf{t}$ and $t(k)$ for the mass assigned by $\mathbf t$ to the atom $k$. In particular,  $\mathbf{t}/|\mathbf{t}|$ is the empirical distribution of the numbers of children of the forebears of an individual.

For every tree $\mathcal{T}$ and vertex $ {u}\in \mathcal{T}$, we write $\mathcal{T}_{u}$ for the subtree that stems from ${u}$. Specifically a vertex $v$ belongs to $\mathcal{T}_{u}$ if and only if ${u} v$ (the concatenation of the words ${u}$ and $v$) is a vertex of $\mathcal{T}$, and then we agree that the type of $v$ in  $\mathcal{T}_{u}$ is the one of ${u} v$ in $\mathcal{T}$. We also write
$$\mathcal{T}_{|\ell} \coloneqq \{{u}\in \mathcal{T} : |{u}|\leq \ell\}$$
for the restriction of the genealogical tree to the first $j$ generations.

For every type $\mathbf{t}\in \mathcal{M}_{\boldsymbol{\nu}}$, we write $\mathcal{P}_{\mathbf{t}}$ for the law of the genealogical tree $\mathcal T$ of the multitype branching process with the reproductions laws described above conditioned on the ancestor having type $\mathbf{t}$. With this formalism at hand, we can now recover the branching property. The basic Markov-branching property for the multitype branching process can be stated as follows.

\begin{lemma}
\label{L:branch}
Let $T$ be a fixed tree of height $\ell$ (i.e. such that $\sup_{v \in T} |v| = \ell)$ and an arbitrary initial type $\mathbf{t} \in \mathcal{M}_{\boldsymbol{\nu}}$, we denote by $\mathbf{t}({u})$ the type of the vertex ${u} \in T$ by fixing $\mathbf{t}(\varnothing) = \mathbf{t}$. Then, under the conditioned probability measure $\mathcal{P}_{\mathbf{t}}(\cdot | \mathcal{T}_{|\ell} = T)$, the subtrees $(\mathcal{T}_{u}: {u} \in \mathcal{T} \text{ and } |{u}| = \ell)$ are independent and each $\mathcal{T}_u$ has the law $\mathcal{P}_{\mathbf{t}({u})}$.
\end{lemma}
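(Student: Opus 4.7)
The plan is to exploit the iterative, generation-by-generation construction of the multitype branching process, in which the reproduction of each individual depends only on its own type and is conditionally independent of everything else.

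First I would spell out the construction of the multitype tree under $\mathcal{P}_{\mathbf{t}}$ as follows: starting with a single root $\varnothing$ of type $\mathbf{t}$, suppose $\mathcal{T}_{|n-1}$ together with the types $\mathbf{t}(v)$ for all $|v| \leq n-1$ has been built. Then, conditionally on this data, each vertex $v$ with $|v| = n-1$ independently draws an offspring number $K_v$ with law $\boldsymbol{\pi}_{\mathbf{t}(v)}$, gives birth to children $v1, \ldots, vK_v$, and each child receives the type $\mathbf{t}(v) + \delta_{K_v}$. Iterating produces $\mathcal{T}$, and the resulting law is precisely $\mathcal{P}_{\mathbf{t}}$, since each reproduction event depends only on the type of the vertex.

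Given this description, the claim becomes almost tautological. Conditioning on $\mathcal{T}_{|\ell} = T$ specifies the outcomes of all reproduction events at generations $0, \ldots, \ell - 1$, and thereby deterministically fixes the type $\mathbf{t}(u)$ of every $u \in T$. The generations beyond $\ell$ are then obtained by applying the same iterative procedure, independently at each vertex $u$ with $|u| = \ell$, starting from a single root of type $\mathbf{t}(u)$. Since these reproduction events are independent of those occurring at earlier generations and since their laws depend only on the generation-$\ell$ types, the subtrees $(\mathcal{T}_u : |u| = \ell)$ are mutually independent, with each $\mathcal{T}_u$ distributed as $\mathcal{P}_{\mathbf{t}(u)}$.

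The only point needing any genuine care — and the main (albeit minor) obstacle — is to verify that the iterative construction above does yield exactly the law $\mathcal{P}_{\mathbf{t}}$ defined through the reproduction laws $\boldsymbol{\pi}_{\mathbf{t}'}$. Concretely, one must check that the type of each vertex is a deterministic function of the initial type $\mathbf{t}$ and the ancestral out-degrees, and that for any finite realisation $T$, the joint probability factorises as $\prod_{v \in T} \pi_{\mathbf{t}(v)}(d(v))$. This is routine combinatorial bookkeeping, and once it is in place, the Markov-branching property follows exactly as in the classical single-type Galton--Watson case.
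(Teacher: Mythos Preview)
Your argument is correct and is precisely the standard verification one would give for the Markov--branching property of a multitype Galton--Watson process: the type of each vertex is a deterministic function of the initial type and the sequence of ancestral out-degrees, and the generation-by-generation construction makes the conditional independence of the subtrees immediate. Note that the paper itself states Lemma~\ref{L:branch} without proof, treating it as a basic fact about multitype branching processes (indeed, it is introduced as ``the basic Markov-branching property''), so there is no alternative approach to compare against; your write-up simply fills in the routine details the authors chose to omit.
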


We will also use in the sequel the following consequence of Lemma~\ref{L:branch}. For any  initial type $\mathbf{t}\in \mathcal{M}_{\boldsymbol{\nu}}$, every vertex ${u}\in \mathcal{U}$, and every $d\geq 1$ such that $\nu(d)>0$, under the conditional probability measure $\mathcal{P}_{\mathbf{t}}$ given that ${u}\in \mathcal{T}$, that ${u}$ has outer degree $d({u})=d$ in $\mathcal{T}$ and type $\mathbf{s}$, the $d$ subtrees $\mathcal{T}_{{u}1}, \ldots, \mathcal{T}_{{u}d}$ are independent and each has  the law $\mathcal{P}_{\mathbf{s}+\delta_d}$.

We further point at a useful domination property which should be intuitively obvious.
\begin{lemma} \label{L:dom}
Consider two types $\mathbf{t},\mathbf{t'}\in \mathcal{M}_{\boldsymbol{\nu}}$ such that $|\mathbf{t}|=|\mathbf{t'}|\geq 1$ and
\[\mathbf{t}([\ell,\ks]) \leq \mathbf{t}'([\ell,\ks]) \text{ for all $\ell \in \{1,\ldots,\ks\}$}.\]
In words,  $\mathbf{t}$ and $\mathbf{t'}$ have same total mass and the normalized probability measure $\mathbf{t'}/|\mathbf{t}'|$ stochastically dominates $\mathbf{t}/|\mathbf{t}|$. Then we can construct two random genealogical trees $\mathcal T$ and $\mathcal T'$, the first with the law $\mathcal{P}_{\mathbf{t}}$ and the second with the law $\mathcal{P}_{\mathbf{t'}}$, such that $\mathcal T$ is a subtree of $\mathcal T'$.
\end{lemma}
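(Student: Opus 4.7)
\medskip
\noindent\textbf{Proof plan for Lemma~\ref{L:dom}.}
The plan is to build the coupling by induction on the generations, using at each step a stochastic ordering between the two reproduction laws. The key preliminary observation is that the hypothesis $\mathbf{t}([\ell,\ks])\leq \mathbf{t}'([\ell,\ks])$ for every $\ell$, together with $|\mathbf{t}|=|\mathbf{t}'|$, implies that the reproduction law $\boldsymbol{\pi}_{\mathbf{t}'}$ stochastically dominates $\boldsymbol{\pi}_{\mathbf{t}}$. Indeed, for every $k\geq 1$,
\[
\boldsymbol{\pi}_{\mathbf{t}}(\{k,\ldots,\ks\})=(1-q)\,\boldsymbol{\nu}(\{k,\ldots,\ks\})+q\,\frac{\mathbf{t}([k,\ks])}{|\mathbf{t}|}
\leq (1-q)\,\boldsymbol{\nu}(\{k,\ldots,\ks\})+q\,\frac{\mathbf{t}'([k,\ks])}{|\mathbf{t}'|}=\boldsymbol{\pi}_{\mathbf{t}'}(\{k,\ldots,\ks\}),
\]
and both sides are equal to $1-(1-q)\nu(0)$ for $k=0$ in the complement.

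Now I would couple the genealogical trees generation by generation as follows. At the root, sample a pair $(K,K')$ of integers valued in $\{0,1,\ldots,\ks\}$ with $K\leq K'$ almost surely, with marginals $\boldsymbol{\pi}_{\mathbf{t}}$ and $\boldsymbol{\pi}_{\mathbf{t}'}$ respectively, which is possible by the stochastic domination just established. Declare that the children of $\varnothing$ in $\mathcal T$ are $1,\ldots,K$ and those in $\mathcal T'$ are $1,\ldots,K'$, so that the first-generation vertices of $\mathcal T$ form a subset of those of $\mathcal T'$ in the Ulam--Harris--Neveu encoding. By definition, all children of $\varnothing$ in $\mathcal T$ receive type $\mathbf{t}+\delta_K$ and all those in $\mathcal T'$ receive type $\mathbf{t}'+\delta_{K'}$. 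These new types satisfy $|\mathbf{t}+\delta_K|=|\mathbf{t}'+\delta_{K'}|=|\mathbf{t}|+1$, and for every $\ell \in \{1,\ldots,\ks\}$,
\[
(\mathbf{t}+\delta_K)([\ell,\ks])=\mathbf{t}([\ell,\ks])+\ind{K\geq \ell}\leq \mathbf{t}'([\ell,\ks])+\ind{K'\geq \ell}=(\mathbf{t}'+\delta_{K'})([\ell,\ks]),
\]
since $\mathbf{t}([\ell,\ks])\leq \mathbf{t}'([\ell,\ks])$ and $K\leq K'$. Thus the domination hypothesis is preserved for the matched children.

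Next, for each $j=1,\ldots,K$, apply the construction recursively to the pair of subtrees rooted at $j$, with initial types $\mathbf{t}+\delta_K$ for $\mathcal T$ and $\mathbf{t}'+\delta_{K'}$ for $\mathcal T'$, sampled jointly (but conditionally independently across different $j$) so that one is contained in the other. For the remaining indices $j=K+1,\ldots,K'$, which only appear in $\mathcal T'$, sample the subtrees rooted there independently with law $\mathcal{P}_{\mathbf{t}'+\delta_{K'}}$. By the branching property stated in Lemma~\ref{L:branch}, applied on both sides, the tree $\mathcal T$ so constructed has law $\mathcal{P}_{\mathbf{t}}$ and $\mathcal T'$ has law $\mathcal{P}_{\mathbf{t}'}$; moreover the inductive construction directly guarantees that $\mathcal T$ is a subtree of $\mathcal T'$.

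The only mildly delicate point is to phrase this recursion rigorously: rather than an infinite regress one works with the restrictions $\mathcal T_{|n}$ and $\mathcal T'_{|n}$ and argues by induction on $n\geq 0$ that one can couple them with $\mathcal T_{|n}\subset \mathcal T'_{|n}$ and with the correct joint conditional law on the types at generation $n$; passing to $n=\infty$ then yields the claim. I do not anticipate any real obstacle beyond book-keeping, since the type at every matched vertex in $\mathcal T$ is consistently dominated, in the sense of the lemma, by the type of the corresponding vertex in $\mathcal T'$, which is exactly what keeps the induction going.
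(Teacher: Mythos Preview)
Your argument is correct and follows essentially the same approach as the paper: couple the offspring counts at the root via the stochastic domination of $\boldsymbol{\pi}_{\mathbf{t}}$ by $\boldsymbol{\pi}_{\mathbf{t}'}$, observe that the domination of types is inherited by the children, and iterate using the branching property of Lemma~\ref{L:branch}. You have simply made explicit the verification of the tail inequalities and the preservation of the ordering for the updated types, which the paper leaves to the reader.
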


\begin{proof}
We immediately see by inspection of the reproduction laws of the ancestor as a function of its type, that we can couple the number of children $Z(1)$ of the ancestor under law $\mathcal{P}_{\mathbf{t}}$ with that $Z'(1)$ under $\mathcal{P}_{\mathbf{t}'}$, such that $Z(1)\leq Z'(1)$ by the stochastic domination condition. Then the types $\mathbf{t}+\delta_{Z(1)}$ of the individuals at the first generation under $\mathcal{P}_{\mathbf{t}}$ are also dominated by the types $\mathbf{t'} + \delta_{Z'(1)}$ under $\mathcal{P}_{\mathbf{t}}$. We can then complete the proof by induction using the branching property of Lemma~\ref{L:branch}.
\end{proof}

\section{Some useful bounds}
\label{sec:bounds}
The purpose of this section is to establish some inequalities that will be needed in the proof of Theorem~\ref{T1}. These follow readily from the framework develop in the preceding section and the key estimates \eqref{E:asympmoyen} and \eqref{E:asympmoyen0}. For every $n\geq 0$, the integer-valued measure
$$\mathcal{Z}_n \coloneqq \sum_{|{u}|=n, {u}\in \mathcal{T}} \delta_{\mathbf{t}({u})}, \qquad n\geq 0,$$
that counts the number of individuals at the $n$-th generation as a function of their types, can be viewed as an enriched version of the reinforced Galton-Watson process for which the types of individuals are recorded. In particular, the total mass of $|\mathcal{Z}_n|\coloneqq \mathcal{Z}_n(\mathcal M_{\boldsymbol{\nu}})$ under $\mathcal P_{\mathbf{0}}$ coincides with $Z(n)$ under $\P_{\ks}$.

We start with the following generalization of the bounds \eqref{E:asympmoyen} and \eqref{E:asympmoyen0} to arbitrary fixed initial types. Recall the notation \eqref{E:I} and \eqref{E:m*}.

\begin{lemma} \label{L:L1boundbis} Consider a type $\mathbf{t} \in \mathcal{M}_{\boldsymbol{\nu}}$ with $|\mathbf t | = \ell$.
As $n\to \infty$, we have
$$\mathcal E_{\mathbf{t}}\left( |\mathcal{Z}_n|\right) \sim \left( \frac{m_{\boldsymbol{\nu},q}}{m_{*,q}}\right)^{\ell} m_{\boldsymbol{\nu},q}^{n}$$
when ${t}(\ks) = \ell$ (i.e. $\mathbf t = \ell \delta_\ks$), whereas
$$\mathcal E_{\mathbf{t}}\left( |\mathcal{Z}_n|\right) = o\left( m_{\boldsymbol{\nu},q}^n\right)$$
otherwise.
\end{lemma}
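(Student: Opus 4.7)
The plan is to induct on the total mass $\ell = |\mathbf t|$, treating both assertions together at each level, with the $o$-bound proved first so that it can be fed into the sharp asymptotic.

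For the base case $\ell = 1$, Lemma~\ref{L:branch} identifies $|\mathcal Z_n|$ under $\mathcal P_{\mathbf 0}$ with $Z(n)$ under $\P_\ks$ and, more generally (cf.\ the footnote), gives $\E_k(Z(n)) = k\, \mathcal E_{\delta_k}(|\mathcal Z_{n-1}|)$ for every $k\ge 1$ with $\nu(k)>0$. Inserting \eqref{E:asympmoyen} yields $\mathcal E_{\delta_{\ks}}(|\mathcal Z_n|) \sim (m_{\boldsymbol{\nu},q}/m_{*,q})\, m_{\boldsymbol{\nu},q}^n$ and \eqref{E:asympmoyen0} yields $\mathcal E_{\delta_k}(|\mathcal Z_n|) = o(m_{\boldsymbol{\nu},q}^n)$ for $k<\ks$.

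For the inductive step at mass $\ell\ge 2$, I would first extend the $o$-bound. If $\mathbf t \neq \ell\delta_{\ks}$ has mass $\ell$ and $j$ is any atom of $\mathbf t$ strictly below $\ks$, then $(\ell-1)\delta_{\ks} + \delta_j$ stochastically dominates $\mathbf t$ in the sense of Lemma~\ref{L:dom}, so it suffices to treat $\mathbf t = (\ell-1)\delta_{\ks} + \delta_j$ with $j<\ks$. For this, I would use the parent type $\mathbf t' := (\ell-2)\delta_{\ks} + \delta_j$, which has mass $\ell-1$ and differs from $(\ell-1)\delta_{\ks}$. Conditioning on the ancestor under $\mathcal P_{\mathbf t'}$ having $\ks$ children (an event of probability $\pi_{\mathbf t'}(\ks)>0$, since $\nu(\ks)>0$) and applying Lemma~\ref{L:branch} gives
\[
\mathcal E_{\mathbf t'}(|\mathcal Z_n|) \;\geq\; \ks\, \pi_{\mathbf t'}(\ks)\, \mathcal E_{\mathbf t}(|\mathcal Z_{n-1}|),
\]
and the inductive $o$-bound at mass $\ell-1$ applied on the left forces $\mathcal E_{\mathbf t}(|\mathcal Z_n|) = o(m_{\boldsymbol{\nu},q}^n)$.

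The sharp asymptotic at mass $\ell$ then falls out of the recursion obtained by conditioning on the ancestor's reproduction under $\mathcal P_{(\ell-1)\delta_{\ks}}$ (noting that $\pi_{(\ell-1)\delta_{\ks}}(\ks) = m_{*,q}/\ks$):
\[
\mathcal E_{(\ell-1)\delta_{\ks}}(|\mathcal Z_n|) \;=\; m_{*,q}\, \mathcal E_{\ell\delta_{\ks}}(|\mathcal Z_{n-1}|) \;+\; \sum_{k=1}^{\ks-1} k(1-q)\nu(k)\, \mathcal E_{(\ell-1)\delta_{\ks} + \delta_k}(|\mathcal Z_{n-1}|).
\]
The left-hand side is $\sim (m_{\boldsymbol{\nu},q}/m_{*,q})^{\ell-1}\, m_{\boldsymbol{\nu},q}^n$ by the inductive hypothesis, and by the $o$-bound just proved at mass $\ell$ each summand on the right is $o(m_{\boldsymbol{\nu},q}^n)$; solving for $\mathcal E_{\ell\delta_{\ks}}(|\mathcal Z_{n-1}|)$ gives the desired equivalent after re-indexing.

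The main obstacle is in the $o$-bound step: a naive reduction that removes the atom at $j<\ks$ (passing to a parent of mass $\ell-1$) lands on the \emph{sharp-asymptotic} regime and therefore yields only an $O(m_{\boldsymbol{\nu},q}^n)$ upper bound, which is useless here. The trick is instead to remove an atom at $\ks$, which lands on an $o$-bound at mass $\ell-1$ and supplies the missing factor of $m_{\boldsymbol{\nu},q}^{-1}$; this forces the particular order in which (ii) must precede (i) within the induction.
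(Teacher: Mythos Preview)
Your argument is correct. The base case and both inductive steps go through as you describe; in particular, the domination claim that $(\ell-1)\delta_{\ks}+\delta_j$ majorises any $\mathbf t$ of mass $\ell$ with an atom at $j<\ks$ is easily checked, and removing a $\ks$-atom to land in the $o$-regime at mass $\ell-1$ is exactly the right manoeuvre.

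The organisation, however, differs from the paper's. The paper does not induct on $|\mathbf t|$; instead it relates each type back to the base in a single shot. For the $o$-bound, it starts from $\delta_j$ with $j<\ks$, observes that with probability at least $((1-q)\nu(\ks))^{\ell}$ the individual $(1,\ldots,1)$ at generation $\ell$ exists with type $\delta_j+\ell\delta_{\ks}$, and uses branching to bound $\mathcal E_{\delta_j+\ell\delta_{\ks}}(|\mathcal Z_n|)$ directly by $\mathcal E_{\delta_j}(|\mathcal Z_{n+\ell}|)$; Lemma~\ref{L:dom} then covers general $\mathbf t\neq\ell\delta_{\ks}$. For the sharp asymptotic, it applies branching under $\mathcal P_{\mathbf 0}$ at generation $\ell$, writes $\mathcal E_{\mathbf 0}(|\mathcal Z_{n+\ell}|)$ as $\E_{\ks}(Z_*(\ell))\,\mathcal E_{\ell\delta_{\ks}}(|\mathcal Z_n|)$ plus an $o(m_{\boldsymbol\nu,q}^n)$ remainder, and solves for $\mathcal E_{\ell\delta_{\ks}}(|\mathcal Z_n|)$ using $\E_{\ks}(Z_*(\ell))=\ks m_{*,q}^{\ell-1}$ and \eqref{E:asympmoyen}. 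Your one-step recursion is the $\ell\to\ell{-}1$ instance of the same relation, iterated; the paper's version jumps straight from $\ell$ to $0$. Both routes use the same ingredients (Lemma~\ref{L:branch}, Lemma~\ref{L:dom}, \eqref{E:asympmoyen}--\eqref{E:asympmoyen0}); the paper's is a little shorter, while yours makes the interplay between the two assertions more explicit.
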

We mention that the approach in \cite{BM} would yield a much sharper estimate in the second case.
However, calculations would be technically rather demanding and we prefer to establish the weaker result using only a much simpler argument, as this suffices for our purpose.

\begin{proof} We consider first the case when type of the ancestor of the reinforced Galton-Watson process is the measure $\delta_j$ and work under $\mathcal P_{\delta_j}$. We may imagine that the ancestor had a parent at generation $-1$ and $j-1$ siblings for some $1\leq j<\ks$ with $\nu(j)>0$. This yields the identity
$$j \mathcal E_{\delta_j}\left( |\mathcal{Z}_n|\right)= \E_j(Z(n+1)),$$
and \eqref{E:asympmoyen0} entails that for any fixed $\ell \geq 0$,
$$\mathcal E_{\delta_j}\left( |\mathcal{Z}_{\ell + n}|\right)= o\left( m_{\boldsymbol{\nu},q}^n\right) \text{ as $n \to \infty$}.$$
The probability under $\mathcal P_{\delta_j}$ that the individual $(1,\ldots, 1)$ at generation $\ell$ is present in the population and has type $\delta_j + \ell\delta_{\ks}$ (i.e. all its forebears at generations $0, \ldots, \ell-1$ had $\ks$ children) is no less than $((1-q)\nu(\ks))^{\ell}>0$. It follows from the branching property that
$$\mathcal E_{\delta_j + \ell \delta_\ks}\left( |\mathcal{Z}_{n}|\right) \leq ((1-q)\nu(\ks))^{-\ell} \mathcal E_{\delta_j}\left( |\mathcal{Z}_{\ell + n}|\right)= o\left( m_{\boldsymbol{\nu},q}^n\right).$$
An application of Lemma~\ref{L:dom} completes the second claim of the statement.

We next work under $\mathcal P_{\mathbf{0}}$; recall from \eqref{E:asympmoyen} that
$$\mathcal E_{\mathbf{0}}\left( |\mathcal{Z}_{n+\ell}|\right)= \E_{\ks}(Z(n+\ell))\sim \frac{m_{\boldsymbol{\nu},q}^{n+\ell}}{q+
(1-q) \nu( \ks )} . $$
We apply the branching property under $\mathcal{P}_\mathbf{0}$ at the $\ell$-th generation. Recall that the number of individuals of type $\ell \delta_\ks$ is
$$ \mathcal Z_{\ell}(\{\ell \delta_\ks\}) = Z_*(\ell),$$
and that $Z_*$ is a usual Galton-Watson process with mean reproduction number $m_{*,q}$ given that the ancestor has $\ks$ children.
In particular, the mean number of these individuals is
\begin{equation} \label{E:mean*}
\E_{\ks}(Z_*(\ell))= \ks m_{*,q}^{\ell-1}.
\end{equation}
There are also $Z(\ell)-Z_*(\ell) \leq k_*^{\ell}$ individuals with types different from $\ell \delta_{\ks}$, and we know from the first part of this proof that as $n\to \infty$, the average  descent of each of them at generation $n+\ell$ is $o\left( m_{\boldsymbol{\nu},q}^n\right)$. Therefore, the branching property yields
$$\mathcal E_{\mathbf{0}}\left( |\mathcal{Z}_{n+\ell}|\right) \sim \E_{\ks}(Z_*(\ell)) \mathcal E_{\ell \delta_{\ks}}\left( |\mathcal{Z}_{n}|\right),$$
and we conclude that
$$ \mathcal E_{\ell \delta_{\ks}}\left( |\mathcal{Z}_{n}|\right) \sim  \frac{ 1}{q+
(1-q) \nu( \ks )} \,  \frac{m_{*,q}}{\ks}\, \left( \frac{m_{\boldsymbol{\nu},q}}{m_{*,q}}\right)^{\ell} m_{\boldsymbol{\nu},q}^{n}= \left( \frac{m_{\boldsymbol{\nu},q}}{m_{*,q}}\right)^{\ell} m_{\boldsymbol{\nu},q}^{n}, $$
using \eqref{E:m*} for the last equality.
\end{proof}

A similar argument also yields  the following uniform bounds.

\begin{lemma} \label{L:L1bound}
There is some finite constant $c_{\boldsymbol{\nu},q}$ depending only on the reinforcement parameter and the reproduction law, such that for any type $\mathbf{t} \in \mathcal{M}_{\boldsymbol{\nu}}$ and any $n\geq0$, one has
$$\mathcal E_{\mathbf{t}}\left( |\mathcal{Z}_n|\right) \leq c_{\boldsymbol{\nu},q} \left(\frac{ m_{\boldsymbol{\nu},q}}{m_{*,q}} \right)^{|\mathbf{t}|} m_{\boldsymbol{\nu},q}^n.$$
\end{lemma}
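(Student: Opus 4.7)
The plan is to reduce the general case to $\mathbf{t} = \ell\delta_{\ks}$ by stochastic domination, and then to extract this special expectation from the averaged asymptotic \eqref{E:asympmoyen} via a branching-property identity.

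First, for any type $\mathbf{t}$ with total mass $\ell = |\mathbf{t}|\geq 1$, the trivial inequality $\mathbf{t}([j,\ks])\leq \ell = (\ell\delta_{\ks})([j,\ks])$ for every $j\in\{1,\ldots,\ks\}$ shows that $\mathbf{t}$ and $\ell\delta_{\ks}$ meet the hypotheses of Lemma~\ref{L:dom}. Hence
$$\mathcal{E}_{\mathbf{t}}(|\mathcal{Z}_n|)\leq \mathcal{E}_{\ell\delta_{\ks}}(|\mathcal{Z}_n|),$$
and it suffices to prove the announced bound in the single case $\mathbf{t} = \ell\delta_{\ks}$.

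Next, I would decompose the process started from the null ancestor at generation $\ell$ via the branching property (Lemma~\ref{L:branch}):
$$\mathcal{E}_{\mathbf{0}}(|\mathcal{Z}_{n+\ell}|) = \sum_{\mathbf{s}} \mathcal{E}_{\mathbf{0}}(\mathcal{Z}_\ell(\{\mathbf{s}\}))\,\mathcal{E}_{\mathbf{s}}(|\mathcal{Z}_n|) \;\geq\; \mathcal{E}_{\mathbf{0}}(\mathcal{Z}_\ell(\{\ell\delta_{\ks}\}))\,\mathcal{E}_{\ell\delta_{\ks}}(|\mathcal{Z}_n|).$$
The individuals at generation $\ell$ with type exactly $\ell\delta_{\ks}$ form the $\ell$-th generation of the embedded pure Galton--Watson sub-process $Z_*$ under $\mathcal{P}_{\mathbf{0}}$, whose offspring mean is $m_{*,q}$ (see \eqref{E:m*}); hence $\mathcal{E}_{\mathbf{0}}(\mathcal{Z}_\ell(\{\ell\delta_{\ks}\})) = \ks m_{*,q}^{\ell-1}$. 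Rearranging yields an upper bound on $\mathcal{E}_{\ell\delta_{\ks}}(|\mathcal{Z}_n|)$ in terms of $\mathbb{E}_{\ks}(Z(n+\ell))$.

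Finally, I would invoke \eqref{E:asympmoyen} to obtain a constant $C$ depending only on $\boldsymbol{\nu}$ and $q$ such that $\mathbb{E}_{\ks}(Z(m))\leq C\, m_{\boldsymbol{\nu},q}^m$ for every $m\geq 0$. Substituting $m = n+\ell$ and combining with the previous display produces exactly the announced factor $(m_{\boldsymbol{\nu},q}/m_{*,q})^\ell m_{\boldsymbol{\nu},q}^n$, up to a multiplicative constant of the shape $C m_{*,q}/\ks$ depending only on $\boldsymbol{\nu}$ and $q$. The remaining case $\mathbf{t} = \mathbf{0}$ is covered directly by \eqref{E:asympmoyen} (with $\ell = 0$), and taking the maximum of the two constants as $c_{\boldsymbol{\nu},q}$ completes the proof. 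No step presents a real obstacle; the conceptual point to underline is that the branching decomposition at generation $\ell$ is tight enough that the asymptotic \eqref{E:asympmoyen} at the null type implicitly controls $\mathcal{E}_{\ell\delta_{\ks}}(|\mathcal{Z}_n|)$ uniformly in $\ell$.
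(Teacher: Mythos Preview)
Your proof is correct and follows essentially the same route as the paper: reduce to $\mathbf{t}=\ell\delta_{\ks}$ via Lemma~\ref{L:dom}, bound $\mathcal{E}_{\ell\delta_{\ks}}(|\mathcal{Z}_n|)$ from above by comparing with $\mathcal{E}_{\mathbf{0}}(|\mathcal{Z}_{n+\ell}|)$ through the branching decomposition at generation $\ell$ and the identity $\mathcal{E}_{\mathbf{0}}(\mathcal{Z}_\ell(\{\ell\delta_{\ks}\}))=\ks m_{*,q}^{\ell-1}$, and finally invoke \eqref{E:asympmoyen} for the uniform bound on $\mathcal{E}_{\mathbf{0}}(|\mathcal{Z}_m|)$. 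The only cosmetic difference is that the paper treats $\mathbf{t}=\mathbf{0}$ first and applies the domination last, whereas you reverse that order; your explicit remark about taking the maximum of the two constants is a minor tidiness improvement.
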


\begin{proof}
Consider first the case when $\mathbf{t}=\mathbf{0}$, so $|\mathbf{t}|=0$. Recall that
$$\mathcal E_{\mathbf{0}}\left( |\mathcal{Z}_n|\right)= \E_{\ks}(Z(n)),$$
so by \eqref{E:asympmoyen}, we can find some finite constant $c$ such that
$$\mathcal E_{\mathbf{0}}\left( |\mathcal{Z}_n|\right) \leq c m_{\boldsymbol{\nu},q}^n \qquad\text{for all }n\geq 0.$$

Next, as it was already discussed previously, by focussing on individuals at a given  generation $\ell \geq 1$
whose forebears all had $\ks$
children and applying the branching property of Lemma~\ref{L:branch}, we get the inequality
$$\mathcal E_{\mathbf{0}} \left( |\mathcal{Z}_{n+\ell}|\right) \geq \E_{\ks}(Z_*(\ell))
\mathcal E_{\ell\delta_\ks}\left( |\mathcal{Z}_n|\right).$$
The identity \eqref{E:mean*} entails our claim whenever $t(\ks)= |\mathbf{t}|$ with
$c_{\boldsymbol{\nu},q}= c m_{*,q}/\ks$.
Finally, the general case for a type  $\mathbf{t}$ follows from above by an application of Lemma~\ref{L:dom}, since $\mathbf{t}$ is dominated by $|\mathbf{t}|\delta_\ks$.
\end{proof}

When the usual Galton-Watson process $Z_*$ is supercritical, we can also bound the second moment of $|\mathcal{Z}_n|$ by combining these inequalities with a combinatorial argument.
\begin{lemma}
\label{L:new}
If $m_{*,q}>1$, then there exists a constant $C_{\boldsymbol{\nu},q}$ such that for all $n \geq 1$, and $\mathbf{t} \in \mathcal{M}_{\boldsymbol{\nu}}$, we have
\[
  \mathcal{E}_\mathbf{t}\left( |\mathcal{Z}_n|^2 \right) \leq C_{\boldsymbol{\nu},q} \left( \frac{m_{\boldsymbol{\nu},q}}{m_{*,q}} \right)^{2|\mathbf{t}|} m_{\boldsymbol{\nu},q}^{2n}.
\]
\end{lemma}

\begin{proof}
Observe that we can write
\[
  |\mathcal{Z}_n|^2 = |\mathcal{Z}_{n}|\left(|\mathcal{Z}_{n}|-1 \right) + |\mathcal{Z}_n|.
\]
It is therefore enough to bound $\mathcal{E}_{\mathbf{t}}\left(|\mathcal{Z}_{n}|\left(|\mathcal{Z}_{n}|-1 \right) \right)$ (the mean number of couples of distinct individuals alive at generation $n$) to complete the proof.

For $v \in \mathcal{T}$ with $|v| < n$, we denote by $\mathcal{Z}_n^v = \sum_{{u} \in \mathcal{T}^v : |{u}|=n} \delta_{\mathbf{t}(u)}$ the counting measure of the subpopulation at generation $n$ descending from the individual $v$. We can now write
\begin{align*}
  \mathcal{E}_{\mathbf{t}}\left(|\mathcal{Z}_{n}|\left(|\mathcal{Z}_{n}|-1 \right) \right)
  &= \mathcal{E}_{\mathbf{t}}\left( \sum_{|u|=n} \ind{u \in \mathcal{T}} \left(|\mathcal{Z}_{n}|-1 \right) \right)\\
  &= \sum_{|u|=n} \mathcal{E}_{\mathbf{t}}\left(\ind{u \in \mathcal{T}} \sum_{j=0}^{n-1} \sum_{k = 1}^{d(u(j))} \ind{u(j)k \neq u(j+1)} |\mathcal{Z}_n^{u(j)k}| \right),
\end{align*}
where $u(j) = (u_1,\ldots,u_j)$ for $u = (u_1,\ldots,u_n)$. In words, we decompose the $|\mathcal{Z}_n|-1$ individuals alive at generation $n$ barring $u$ according to their most recent common ancestor with $u$. We obtain from the branching property of $\mathcal{Z}$ that
\begin{align*}
  &\phantom{=}\mathcal{E}_{\mathbf{t}}\left(|\mathcal{Z}_{n}|\left(|\mathcal{Z}_{n}|-1 \right) \right)\\
  &= \sum_{|u|=n} \sum_{j=0}^{n-1} \mathcal{E}_{\mathbf{t}}\left( \ind{u \in \mathcal{T}} (d(u(j))-1) \mathcal{E}_{\mathbf{t}(u(j+1))}\left( |\mathcal{Z}_{n-j-1}| \right) \right)\\
  &\leq \sum_{|u|=n} \sum_{j=0}^{n-1} \mathcal{P}_{\mathbf{t}}(u \in \mathcal{T}) \ks c_{\boldsymbol{\nu},q} m_{\boldsymbol{\nu},q}^{n-j-1} \left(\frac{ m_{\boldsymbol{\nu},q}}{m_{*,q}} \right)^{|\mathbf{t}| + j+1},
\end{align*}
where we used that $d(u(j)) \leq \ks$ and $|\mathbf{t}(u(j)k)| = |\mathbf{t}| + j + 1$ $\mathcal{P}_{\mathbf{t}}$-a.s., and we applied Lemma~\ref{L:L1bound}. As $m_{*,q} >1$, we immediately obtain that
\[
  \mathcal{E}_{\mathbf{t}}\left(|\mathcal{Z}_{n}|\left(|\mathcal{Z}_{n}|-1 \right) \right) \leq \frac{\ks c_{\boldsymbol{\nu},q}}{m_{*,q}-1} \left( \frac{m_{\boldsymbol{\nu},q}}{m_{*,q}} \right)^{|\mathbf{t}|} \mathcal{E}_{\mathbf{t}}(|\mathcal{Z}_n|) m_{\boldsymbol{\nu},q}^{n}.
\]
Applying again Lemma~\ref{L:L1bound}, the proof is now complete.
\end{proof}

\section{Proof of Theorem~\ref{T1}}
\label{sec:p1}
 This section is devoted to the proof of Theorem~\ref{T1}. We first show that under the assumption $m_{*,q} \leq 1$, $Z(n) /\E_{ \ks }(Z(n))$ converges to zero in probability. We next prove that if $ m_{*,q}>1$, then $Z(n)/\E_{\ks}(Z(n))$ converges in $L^1$ to a non-degenerate random variable.

$(i)$ We assume here that $m_{*,q}\leq 1$, that is the usual Galton-Watson process $Z_*$ is critical or sub-critical, and therefore becomes eventually extinct $\P_{\ks}$-almost surely.
For any given $\epsilon >0$, we can choose a generation $\ell\geq 1$ sufficiently large so that
$\P_{\ks}(Z_*(\ell)\geq 1)\leq \epsilon^2$. In other words, with $\mathcal{P}_{\mathbf{0}}$-probability at least $1-\epsilon^2$,  all individuals at generation $\ell$ have at least one forebear that had strictly less than $\ks$ children. Plainly, there are at generation $\ell$ at most $\ks^\ell$ individuals, and the set of possible types for these individuals is also bounded by $\ks^\ell$.
The branching property yields
\begin{align*}&\E_{\ks}(m_{\boldsymbol{\nu},q}^{-n-\ell} Z(n+\ell) , Z_*(\ell)=0)\\ &\leq  m_{\boldsymbol{\nu},q}^{-\ell} \ks^\ell
\max\{ m_{\boldsymbol{\nu},q}^{-n} \mathcal E_{\mathbf{t}}\left( |\mathcal{Z}_n|\right), \mathbf t \in \mathcal{M}_{\boldsymbol{\nu}} : |\mathbf{t}| = \ell > {t}(\ks)\},
\end{align*}
 and we now see from Lemma~\ref{L:L1boundbis} that the right-hand side can be bounded from above by $\epsilon^2$ for all sufficiently large $n$.
 An application of the Markov inequality now gives
 $$\lim_{n\to \infty} \P_{\ks}(m_{\boldsymbol{\nu},q}^{-n-\ell} Z(n+\ell)\geq \epsilon) \leq 2\epsilon ,$$
 and  Theorem~\ref{T1}(i) is proven.

 (ii) We now suppose that $m_{*,q}>1$, i.e. that the usual Galton-Watson process $Z_*$ is supercritical. Since its reproduction law has bounded support,  the process
  $$W_*(n)\coloneqq m_{*,q}^{-n}Z_*(n), \qquad n\geq 0$$ is a martingale bounded in $L^2$,  and we write $W_*$ for its terminal value. Recalling that the ancestor has $\ks$ children, we have
  $$ \mathcal{E}_\mathbf{0}(W_*)= \mathcal{E}_\mathbf{0}(W_*(n))= \ks m_{*,q}^{-1} =  \frac{ 1}{q+
 (1-q) \nu( \ks )}.$$
 Note that by \eqref{E:asympmoyen} and \eqref{E:asympmoyen0}, we have
 \[
  \lim_{n\to\infty} \E_{\ks}(m_{\boldsymbol{\nu},q}^{-n} Z(n)) = \frac{1}{q+
  (1-q) \nu( \ks )} = \mathcal{E}_\mathbf{0}(W_*).
 \]

 To prove that $m_{\nu,q}^{-n}Z(n)$ converges to $W_*$ in $L^1$, we use the following classical variant of Scheffé's lemma: a sequence of non-negative random variables $(\xi_n)$ converges in $L^1(\P)$ to some random variable $\xi$ whenever $\lim_{n\to \infty}\E(\xi_n) =\E(\xi)$ and $\lim_{n\to \infty} \xi_n =  \xi$ in probability. Note that the usual Scheffé's lemma makes the stronger requirement $\xi_n \to \xi$ a.s., but using that from any extraction of $\xi_n$ one can find a subsequence converging almost surely to $\xi$, the result still holds.

 We denote by $\mathcal{T}_*$ the Galton-Watson subtree of $\mathcal{T}$ obtained by only keeping elements of $\mathcal{T}$ with outdegree $\ks$. Recall that $\#\{|u| = \ell : u \in \mathcal{T}_* \} = Z_*(\ell)$. We prove the convergence in probability of $ m_{\nu,q}^{-n}Z(n)$ to $W_*$ by decomposing $\mathcal{Z}_n$ at an intermediate generation $\ell$ as $|\mathcal{Z}_n| = |\mathcal{Z}_n^{\ell*}| + R_n$, where $\mathcal{Z}_n^{\ell*}$ is the point measure associated to individuals at generation $n$ in $\mathcal{T}$ with ancestors at generation $\ell$ that belong to $\mathcal{T}_*$. Using the branching property at generation $\ell$, we remark that $|\mathcal{Z}_n^{\ell*}|$ is the sum of $Z_*(\ell)$ independent copies of $|\mathcal{Z}_{n-\ell}|$ under law $\mathcal{P}_{\ell\delta_\ks}$, while $R_n$ is the sum of at most $\ks^\ell$ independent copies of $|\mathcal{Z}_{n-\ell}|$ starting from initial conditions such that $|\mathbf{t}|= \ell$ and $\mathbf{t}(\ks) < \ell$.

 As a result, for each fixed $\ell > 0$, we have  by Lemma~\ref{L:L1boundbis} that
 \[
   \lim_{n \to \infty} \mathcal{E}_\mathbf{0}\left( m_{\boldsymbol{\nu},q}^{-n}R_n \right) = 0.
 \]
 In particular, $m_{\boldsymbol{\nu},q}^{-n}R_n$ converges to $0$ in probability.

 We now compute the mean and variance of $|\mathcal{Z}_n^{\ell*}|$ conditionally on the first $\ell$ generations of the process. Using the consequence of the branching property described above, we have
 \[
   \mathcal{E}_\mathbf{0}\big(|\mathcal{Z}_n^{\ell*} | {\big |} \mathcal{T}_{|\ell}\big) = Z_*(\ell) \mathcal{E}_\mathbf{\ell \delta_{\ks}}\left(|\mathcal{Z}_{n-\ell}^{\ell*}|\right) \sim_{n \to \infty} m_{\boldsymbol{\nu},q}^n W_*(\ell) \quad \text{a.s.}
 \]
 by Lemma \ref{L:L1boundbis}. Similarly, we compute the conditional variance
 \begin{align*}
&   \mathcal{E}_\mathbf{0} \left( \left(|\mathcal{Z}_n^{\ell*}| - \mathcal{E}_\mathbf{0} \left( |\mathcal{Z}_n^{\ell*}| \middle|\mathcal{T}_{|\ell}\right) \right)^2 \big | \mathcal{T}_{|\ell}z\right) \\
   &=  Z_*(\ell) \mathcal{E}_\mathbf{\ell \delta_{\ks}}\left( \left(|\mathcal{Z}_{n-\ell}^{\ell*}| - \mathcal{E}_\mathbf{\ell \delta_{\ks}} \left( |\mathcal{Z}_{n-\ell}^{\ell*}| \right) \right)^2 \right) \\
   &\leq C_{\boldsymbol{\nu},q} Z_*(\ell) m_{\boldsymbol{\nu},q}^{2(n-\ell)}\left( \frac{m_{\boldsymbol{\nu},q}}{m_{*,q}} \right)^{2\ell},
 \end{align*}
 by Lemma~\ref{L:new}. Therefore, for all $\epsilon > 0$, applying a conditional Bienaymé-Chebyshev inequality, we obtain that for all $0 \leq \ell \leq n$
  \[
    \mathcal{P}_\mathbf{0}\left( m_{\boldsymbol{\nu},q}^{-n} \left| |\mathcal{Z}_n^{\ell*}| - \mathcal{E}_\mathbf{0}\left( |\mathcal{Z}_n^{\ell*}| \middle| \mathcal{T}_{|\ell} \right) \right| >\epsilon \right) \leq C_{\boldsymbol{\nu},q} \mathcal{E}_0(Z_*(\ell)) m_{*,q}^{-2\ell} \epsilon^{-2},
  \]
  and we observe that this bounds converges to $0$ as $\ell \to \infty$ uniformly in $n$.

  Let $\epsilon > 0$, we fix $\ell > 0$ large enough such that
  \[
    \mathcal{P}_\mathbf{0}(|W_*(\ell) - W_*|> \epsilon) < \epsilon \quad \text{and} \quad C_{\boldsymbol{\nu},q} \mathcal{E}_0(Z_*(\ell)) m_{*,q}^{-2\ell} \epsilon^{-2} \leq \epsilon,
  \]
  then $n \geq \ell$ large enough such that
  \[
    \mathcal{P}_\mathbf{0}\left( \left| \mathcal{E}_\mathbf{0}\left( m_{\boldsymbol{\nu},q}^{-n}|\mathcal{Z}_n^{\ell*}| \middle| \mathcal{T}_\ell \right) - W_*(\ell)\right| > \epsilon \right) \leq \epsilon \quad \text{and} \quad     \mathcal{P}_\mathbf{0}( m_{\boldsymbol{\nu},q}^{-n} R_n > \epsilon) \leq \epsilon,
  \]
  we have
  \[
    \mathcal{P}_0\left( |Z_n - W_*| > 4\epsilon \right) \leq 4\epsilon,
  \]
  which completes the proof.

\section{Proof of Theorem~\ref{T2}}
\label{sec:p2}

In this section, we will prove Theorem~\ref{T2};
let us briefly recall our approach using the notation we introduced.
We work from the viewpoint of multitype branching processes under $\mathcal P_{\delta_\ell}$  for some arbitrary fixed $\ell \geq 1$ with $\nu(\ell)>0$. This is equivalent to consider, under $\P_{\ell}$, the subpopulation generated by one of the $\ell$ individuals at the first generation.
We shall introduce a nonnegative martingale $M=(M_n)_{n\geq 1}$ starting from $M_0=1$, which is naturally related to the dynamics of the reinforced Galton-Watson process, and
 in particular vanishes as soon as $Z$ becomes extinct. Therefore $Z$ survives on the event that the terminal value
 $M_\infty$ is nonzero, and the latter happens with positive probability as soon as $M$ is uniformly integrable.

\subsection{A natural martingale}
\label{sec:mart}
We start by introducing some notation.
For every type $\mathbf t$, we set
$$m_{\mathbf t}\coloneqq \mathcal E_{\mathbf t}(Z(1)) = \sum_{j} j {\pi}_{\mathbf t}(j),$$
where, using the notation in Section 2, $\boldsymbol{\pi}_{\mathbf t}$ is the reproduction law of an individual with type $\mathbf t$.
More explicitly, for a non-zero type $\mathbf t \in \mathcal{M}_{\boldsymbol{\nu}}$,
we have
\begin{equation} \label{E:m_t}
m_{\mathbf t}= (1-q) m_\GW + \frac{q}{|\mathbf{t}|} \sum_{j=1}^{\ks} j{t}(j),
\end{equation}
where $m_\GW=\sum_{j} j \nu(j)$ is the mean reproduction number for the usual Galton-Watson process with reproduction law $\boldsymbol{\nu}$.
We further define, for ${u} \in \mathcal{T}$,
\begin{equation} \label{E:Phi}
\Phi({u}) \coloneqq \prod_{j=0}^{|{u}|-1} \frac{1}{m_{\mathbf{t}(p_j({u}))}},
\end{equation}
where $p_j({u})$ denotes the prefix of $u$ with length $j$ and thus represents the forebear of ${u}$  at generation $j$, with the convention that $\Phi(\varnothing)=1$.
In words, for any individual ${u}\in \mathcal{T}$, say at generation $|{u}|=k\geq 1$,
$1/\Phi({u})$ is the product of the mean reproduction numbers of the forebears of this individual.

If follows immediately from the definition of the mean $m_\mathbf{t}$ and the function $\Phi$ that the process $M = (M_n)_{n \geq 1}$ given by
\begin{equation}\label{E:martingale}
  M_n = \sum_{{u} \in \mathcal{T}, |{u}|=n} \Phi(u)
\end{equation}
is a martingale under $\mathcal P_{\mathbf t}$ for any initial type ${\mathbf t}$. Indeed, using the branching property of Lemma~\ref{L:branch} we have
\[
  \mathcal{E}_\mathbf{t} \left( M_{n+1} \middle| \mathcal{T}_{|n} \right) = \sum_{{u} \in \mathcal{T} : |{u}|=n} \frac{\Phi({u})}{m_{\mathbf{t}({u})}} \mathcal{E}_{\mathbf{t}({u})}(|\mathcal{Z}_1|) = M_n.
\]
In other words, $\Phi$ is mean-harmonic for the multitype branching process in the sense of \cite{BiK04}. The main purpose of this section is to study the asymptotic behavior of $M_n$ as $n \to \infty$.

\begin{proposition} \label{P:ui}
We assume that $q\ks < 1$, and consider an arbitrary $\ell \geq 1$ with $\nu(\ell)>0$.
\begin{enumerate}
  \item[(i)]  If \eqref{eqn:supercritical} holds, then the martingale $M$ is uniformly integrable under $\mathcal P_{\delta_\ell}$.
  \item[(ii)]  If
\begin{equation}\label{eqn:subcritical}
  \sum_{j =1}^{\ks} \frac{(1-q)j\nu(j)}{1-qj} < 1,
  \end{equation}
then
  the terminal value of the  martingale $M$ is  $M_{\infty}=0$, $\mathcal P_{\delta_\ell}$-a.s.
\end{enumerate}
\end{proposition}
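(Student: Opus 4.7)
The plan is to follow the spine-decomposition / Durrett-criterion strategy sketched in the introduction. First, define a tilted probability $\mathbb{Q}_{\delta_\ell}$ on $\sigma\bigl(\bigcup_n \mathcal{F}_n\bigr)$, where $\mathcal{F}_n = \sigma(\mathcal{T}_{|n})$, by $\mathrm{d}\mathbb{Q}_{\delta_\ell}|_{\mathcal{F}_n} = M_n\,\mathrm{d}\mathcal{P}_{\delta_\ell}|_{\mathcal{F}_n}$; consistency is precisely the martingale property of $M$. A standard size-biasing argument (as in \cite{BiK04}) enriches $\mathbb{Q}_{\delta_\ell}$ with a distinguished infinite ray $(v_n)_{n\geq 0}$ in $\mathcal{T}$, the \emph{spine}, with the properties: conditionally on $\mathbf{t}_n := \mathbf{t}(v_n) = \mathbf{t}$, the offspring number $D_{n+1}$ of $v_n$ has the size-biased law $\mathbb{Q}(D_{n+1}=k\mid \mathbf{t}) = k\pi_{\mathbf{t}}(k)/m_{\mathbf{t}}$, $v_{n+1}$ is uniform among those $D_{n+1}$ children, and the subtrees rooted at the remaining $D_{n+1}-1$ children are independent with law $\mathcal{P}_{\mathbf{t}+\delta_{D_{n+1}}}$.

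Second, along the spine $\mathbf{t}_n = \delta_\ell + \sum_{k=1}^n \delta_{D_k}$, so $|\mathbf{t}_n|=n+1$, and the normalized composition $x_n(k) := t_n(k)/(n+1)$ evolves as a generalized P\'olya urn: given $x_n$, color $k$ is drawn with probability
\[
p_k(x_n) \;=\; \frac{k\bigl((1-q)\nu(k)+qx_n(k)\bigr)}{(1-q)m_{\GW}+q\sum_j j\, x_n(j)}.
\]
The fixed-point equation $p_k(x^*)=x^*_k$ yields $x^*_k = (1-q)k\nu(k)/(\bar m - qk)$, where $\bar m$ is the unique root $> q\ks$ of $F(\bar m)=1$ with $F(m) := \sum_{k=1}^{\ks}(1-q)k\nu(k)/(m-qk)$; the assumption $q\ks<1$ makes $F(1)$ finite and $F$ smooth and strictly decreasing near $\bar m$. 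Since $F$ is decreasing, \eqref{eqn:supercritical} and \eqref{eqn:subcritical} are equivalent to $\bar m>1$ and $\bar m<1$ respectively. A standard stochastic-approximation argument (the drift $\mathbb{E}_{\mathbb{Q}}[x_{n+1}-x_n\mid x_n]$ is $(n+2)^{-1}$ times a vector field pointing from $x_n$ to $x^*$, with locally contractive Jacobian at $x^*$) gives $x_n\to x^*$ $\mathbb{Q}$-a.s., and by Ces\`aro averaging $n^{-1}\log \Phi(v_n) \to -\log \bar m$ $\mathbb{Q}$-a.s.

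Third, apply Durrett's criterion: a nonnegative $\mathcal{P}$-martingale $(M_n)$ with $M_0=1$ is uniformly integrable under $\mathcal{P}$ iff $\liminf_n M_n<\infty$ $\mathbb{Q}$-a.s., and otherwise $M_\infty=0$ $\mathcal{P}$-a.s. In case (ii), $\bar m<1$ forces $\Phi(v_n)\to\infty$ exponentially fast $\mathbb{Q}$-a.s., so $M_n\geq \Phi(v_n)\to\infty$ and hence $M_\infty=0$. In case (i), conditioning on the spine filtration $\mathcal{G}_\infty := \sigma((v_n,D_{n+1}):n\geq 0)$ and using the spinal decomposition together with the identity $\mathcal{E}_{\mathbf{s}}[M_k]=1$ valid for every type $\mathbf{s}$ and every $k\geq 0$, we obtain
\[
\mathbb{E}_{\mathbb{Q}}\bigl[M_n \,\big|\, \mathcal{G}_\infty\bigr] \;=\; \Phi(v_n)+\sum_{j=1}^n (D_j-1)\Phi(v_j),
\]
which stays uniformly bounded as $n\to\infty$ because $\bar m>1$ makes $\sum_j \Phi(v_j)$ finite a.s.\ while $D_j\leq \ks$; Fatou then yields $\liminf_n M_n<\infty$ $\mathbb{Q}$-a.s., whence uniform integrability.

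The principal obstacle is Step 2: rigorously establishing $x_n\to x^*$ $\mathbb{Q}$-a.s.\ for this generalized P\'olya urn. This is expected from classical stochastic approximation theory, but one must verify local stability of $x^*$ (negativity of the spectrum of the Jacobian of $p-\mathrm{id}$ at $x^*$), deal with coordinates where $\nu(k)=0$ (on which the urn effectively lives on a face of the simplex), and rule out the urn getting trapped near that boundary. Fortunately, no quantitative rate of convergence is required thanks to the Ces\`aro reduction to $n^{-1}\log\Phi(v_n)\to -\log\bar m$.
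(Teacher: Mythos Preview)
Your proposal follows essentially the same route as the paper: tilt by $M$, describe the spine, identify the type sequence along the spine with a P\'olya-type urn, show that the mean reproduction number along the spine converges to the root $\bar m=\lambda_1$ of $\sum_j (1-q)j\nu(j)/(m-qj)=1$, deduce the behaviour of $\Phi(\varsigma(n))$, and conclude via Durrett's criterion with the same conditional identity $\hat{\mathcal E}[M_n\mid \mathcal G_\infty]=\Phi(v_n)+\sum_{j}(D_j-1)\Phi(v_j)$.

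The only substantive difference is your Step~2. You propose to prove $x_n\to x^*$ by stochastic approximation, and you correctly flag that this requires verifying the Jacobian spectrum at $x^*$ and dealing with boundary faces where $\nu(k)=0$. The paper sidesteps these verifications entirely: it enlarges the urn by adding a dummy colour~$\star$ (with activity $(1-q)m_{\GW}$, and with the rule that drawing $\star$ adds one $\star$-ball plus one ball coloured according to the size-biased law~$\hat{\boldsymbol\nu}$), so that the resulting urn has a \emph{linear} mean replacement matrix $A$. One then reads off the spectrum of $A$ explicitly (the eigenvalues are exactly the roots of your equation $F(m)=1$ together with $0$), and invokes Athreya--Karlin / Janson~\cite{AK,Jan04} to get $N_n(j)/n\to v_1(j)$ a.s.\ directly. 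This linearisation trick is what your approach is missing; it buys a one-line proof of the urn convergence in place of a stochastic-approximation argument whose hypotheses you would otherwise have to check by hand.
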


As it was  already pointed out, Proposition~\ref{P:ui}(i) entails Theorem~\ref{T2}.
The rest of this section is devoted to the proof of  Proposition ~\ref{P:ui}.
The analysis relies on classical arguments involving a change of probability induced by the martingale $M$ and a decomposition of the branching  process along the spine. This leads us to investigate the asymptotic behavior of types along the spine, for which we will rely on classical results by Athreya and Karlin \cite{AK} and Janson \cite{Jan04} on P\'olya urns with random replacements.

\subsection{Spinal decomposition}
\label{sec:spine}
In this section, we introduce two distributions, $\bar{\mathcal P}_{\delta_\ell}$ and $\hat{\mathcal P}_{\delta_\ell}$, the first on the space of marked genealogical trees, and the second on the richer space of marked genealogical trees with a distinguished infinite branch called the spine.
The spinal decomposition then identifies $\bar{\mathcal P}_{\delta_\ell}$ as the projection of $\hat{\mathcal P}_{\delta_\ell}$.
In this direction, recall from Section~\ref{sec:const} that $\boldsymbol{\pi}_{\mathbf t}$ stands for the reproduction law of an individual with type $\mathbf t$.
For any probability distribution $\boldsymbol{\pi}$ on $\Z_+$ with a finite and  non-zero first moment, we also denote by $\hat{\boldsymbol{\pi}}$ the size-biased distribution of $\boldsymbol{\pi}$, defined by
\[
  \ \hat{{\pi}}(k) = \frac{k {\pi}(k)}{\sum_{j=0}^\infty j {\pi}(j)}, \qquad k\geq 1.
\]

To start with,   $\bar{\mathcal P}_{\delta_{\ell}}$ is  defined for every $n\geq 1$ by
\[
  \ \bar{\mathcal P}_{\delta_\ell}(A) = {\mathcal E}_{\delta_\ell} \left( M_n \indset{A}\right), \qquad  \ \forall A \in \mathcal{F}_n,
\]
where $\left(\mathcal{F}_n\right)_{n\geq 1}$ stands for the natural filtration on the space of genealogical trees (with types) induced by  generations.
We next construct another distribution involving a spine $\varsigma= (\varsigma(n))_{n\geq 0}$, where the latter is a distinguished line of descent, that is, a sequence of individuals such that for every $n\geq 0$,  $\varsigma(n+1)$ is a child of  $\varsigma(n)$. Specifically, $\varsigma(0)= \varnothing$ and  we let $\varsigma(0)$ reproduce according  to the size-biased reproduction law $\hat{ \boldsymbol{\pi}}_{\delta_\ell}$.
We then select an individual $\varsigma(1)$ uniformly at random amongst the, say, $k$ children of $\varsigma(0)$ which have all the type $\mathbf{t}(\varsigma(1)) = \delta_\ell + \delta_k$. Again we let $\varsigma(1)$ reproduce according  to the size-biased reproduction law $\hat{ \boldsymbol{\pi}}_{\mathbf{t}(\varsigma(1))}$, whereas each other individual ${u}$ in the sibling reproduce independently according to $ \boldsymbol{\pi}_{{u}}$.  And so on, and so forth, that is by induction at every generation, individuals reproduce independently of one from the other's, in such a way that a non-spine individual with type ${\mathbf t}$  reproduce according to the law $\boldsymbol{\pi}_{\mathbf t}$, and the spine particle $\varsigma(n)$, say of type ${\mathbf s}$, reproduces according to the law $\hat{\boldsymbol{\pi}}_{\mathbf s}$. The next individual of the spine $\varsigma(n+1)$ is chosen uniformly among the children of $\varsigma(n)$. The law of the resulting genealogical tree endowed with a spine is denoted by $\hat{\mathcal P}_{\delta_\ell}$.

The following result is then a consequence of \cite[Proposition 12.1 and Lemma 12.3]{BiK04}, the function $\Phi$ playing the role of the harmonic function $h$ in this article, and with $\hat{\mathcal{P}}$ corresponding to $\Q$ there. It is referred to as the spine decomposition of a multitype branching process.
\begin{proposition}[Spine decomposition of the reinforced Galton-Watson process] \label{P:spinedec}
For all $n \in \N$, we have
\[
  \left.\bar{\mathcal P}_{\delta_\ell}\right|_{\mathcal{F}_n} = \left.\hat{\mathcal P}_{\delta_\ell}\right|_{\mathcal{F}_n}.
\]
Moreover, for all $n \in \N$ and ${u} \in \mathcal U$ with $|u|=n$, we have
\[
 \hat{\mathcal P}_{\delta_\ell}(\varsigma_n = {u}| \mathcal{F}_n) = \frac{\Phi(u)\ind{{u} \in \mathcal{T}}}{M_n}.
\]
\end{proposition}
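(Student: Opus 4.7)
The plan is to compute the joint law of $(\mathcal{T}_{|n}, \varsigma_n)$ under $\hat{\mathcal P}_{\delta_\ell}$ by reading off its density directly from the recursive construction of the tree-with-spine, and to deduce both assertions of Proposition~\ref{P:spinedec} from this explicit formula. The intuition is that size-biasing the reproduction law of each spine vertex, then selecting the next spine vertex uniformly at random among its children, yields a likelihood ratio with respect to $\mathcal{P}_{\delta_\ell}$ that is exactly $\Phi(\varsigma_n)$; summing over the position of $\varsigma_n$ at the $n$-th generation then reconstitutes the martingale $M_n$.

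Concretely, I would fix a marked tree $T$ of height $n$ with $\mathbf{t}(\varnothing)=\delta_\ell$ and a vertex $u \in T$ with $|u|=n$, and write
\[
\hat{\mathcal P}_{\delta_\ell}(\mathcal{T}_{|n} = T,\, \varsigma_n = u)
\]
as the product, over all vertices $v$ of $T$ with $|v|<n$, of a reproduction weight: for a non-spine vertex $v$ this weight is $\pi_{\mathbf{t}(v)}(d(v))$, and for the spine vertex $v=p_j(u)$ it is the size-biased weight $d(p_j(u))\,\pi_{\mathbf{t}(p_j(u))}(d(p_j(u)))/m_{\mathbf{t}(p_j(u))}$, multiplied by $1/d(p_j(u))$ for the uniform selection of $\varsigma_{j+1}$ among the children of $p_j(u)$. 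The non-spine factors reconstitute $\mathcal P_{\delta_\ell}(\mathcal{T}_{|n}=T)$, while along the spine the factors $d(p_j(u))$ from the size-biasing cancel those from the uniform selection, leaving the product $\prod_{j=0}^{n-1} 1/m_{\mathbf{t}(p_j(u))} = \Phi(u)$, so that
\[
\hat{\mathcal P}_{\delta_\ell}(\mathcal{T}_{|n} = T,\, \varsigma_n = u) = \Phi(u)\, \mathcal{P}_{\delta_\ell}(\mathcal{T}_{|n} = T).
\]

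Summing this identity over all vertices $u$ of $T$ at generation $n$ produces $\hat{\mathcal P}_{\delta_\ell}(\mathcal{T}_{|n}=T) = \bigl(\sum_{|u|=n,\,u\in T}\Phi(u)\bigr) \mathcal{P}_{\delta_\ell}(\mathcal{T}_{|n}=T)$. Since on the event $\{\mathcal{T}_{|n}=T\}$ the bracketed sum coincides with $M_n$, this is precisely the statement that $\hat{\mathcal P}_{\delta_\ell}|_{\mathcal{F}_n}$ has density $M_n$ with respect to $\mathcal{P}_{\delta_\ell}|_{\mathcal{F}_n}$, that is, $\hat{\mathcal P}_{\delta_\ell}|_{\mathcal{F}_n} = \bar{\mathcal P}_{\delta_\ell}|_{\mathcal{F}_n}$. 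The second assertion follows by dividing the joint density by its $\mathcal{F}_n$-marginal, producing $\Phi(u)\mathbf{1}_{u\in \mathcal{T}}/M_n$.

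The only delicate point is the bookkeeping of spine weights: one must verify that at each generation the size-biasing factor $d(p_j(u))/m_{\mathbf{t}(p_j(u))}$ and the uniform selection factor $1/d(p_j(u))$ combine into the single factor $1/m_{\mathbf{t}(p_j(u))}$, and that the resulting product along the spine reproduces $\Phi(u)$ as defined in \eqref{E:Phi}. Beyond this routine verification the argument is a direct specialization of the spine change of measure and many-to-one formula for multitype branching processes of Biggins and Kyprianou.
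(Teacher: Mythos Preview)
Your argument is correct and is precisely the standard computation underlying the spine decomposition. The paper itself does not give an independent proof: it simply records that Proposition~\ref{P:spinedec} is a direct consequence of \cite[Proposition~1]{BiK04}. Your explicit density calculation is the specialization of that general result to the present multitype setting, so there is no substantive difference in approach---you have merely unpacked the cited reference. One small point worth tightening: your computation fixes a tree $T$ of height exactly $n$, whereas $\mathcal{F}_n$-atoms include trees that become extinct before generation $n$; for those, both $\hat{\mathcal P}_{\delta_\ell}(\mathcal{T}_{|n}=T)$ and $M_n$ vanish (the spine never dies under the size-biased law), so the identity holds trivially and your argument covers the remaining cases.
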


\begin{remark}
As usual, this spine decomposition result gives rise to a \emph{many-to-one} type lemma, yielding in particular to an alternative proof of \cite[Equation (2.1)]{BM}. We observe that
\begin{equation*}
  \mathcal{E}_\ell(|\mathcal{Z}_n|) = \hat{\mathcal{E}}_\ell\left(\frac{|\mathcal{Z}_n|}{M_n}\right) = \hat{\mathcal{E}}_\ell\left( \sum_{|u|=n} \frac{\ind{u = \varsigma(n)}}{\Phi(u)} \right)
  = \hat{\mathcal{E}}_\ell\left( \prod_{j=0}^{n-1} m_{\mathbf{t}(\varsigma(j))} \right).
\end{equation*}
Using the definition of $m_\mathbf{t}$, we conclude that $\mathcal{E}_\ell(|\mathcal{Z}_n|)$ can be computed as the mean of the product of the number of children sampled along a randomly selected line in the reinforced branching process.
\end{remark}

In the next section, we study in more details the reproduction law of the spine particle in terms of an urn model. It allows to describe the reinforced Galton-Watson process under law $\hat{\mathcal P}_{\delta_\ell}$ as a multitype branching process with immigration, whose asymptotic behaviour can be studied. Then, using a classical argument due to Durrett \cite[Theorem 4.3.5]{Dur19} (see also \cite[Theorem 3]{BiK04}), we are able to provide necessary and sufficient conditions for the uniform integrability of the martingale $M$, which we translate into Proposition~\ref{P:ui} in Section~\ref{sec:reinforcedPerpetuity}.

\subsection{Dynamics of the spine as a generalized P\'olya urn}
\label{sec:urns}

For $n\geq 0$, let $\xi_{n+1}$ denote the number of children of $\varsigma(n)$, the individual  on the spine at generation $n$. We also
 agree that $\xi_0\equiv \ell$ under $\hat{\mathcal P}_{\delta_\ell}$;
in particular, the type of  $\varsigma(n)$  is given by $\boldsymbol{\tau}_n \coloneqq  \sum_{j=0}^{n}\delta_{\xi_{j}}$.
From the construction of $ \hat{\mathcal P}_{\delta_\ell} $ in the preceding subsection, $\xi_1$ has the law $\hat{ \boldsymbol{\pi}}_{\delta_\ell}$, and we have for any $k\geq 1$ that
\[
  \hat{\mathcal P}_{\delta_\ell}(\xi_{k+1} = j\mid \xi_0,\xi_1, \ldots, \xi_k) = c_k j \left( (1 - q) \nu(j) + q \boldsymbol{\tau}_k(j)/(k+1) \right),\ j \geq 0,
\]
where $c_k> 0$ is the constant of normalization.

In this section, we shall first identify these dynamics as those of a generalized P\'olya urn. Next, we will determine the asymptotic behavior of the urn process by identifying the principal spectral elements of its mean replacement matrix, using classical results of Janson \cite{Jan04} in this field. This enables us to estimate the value of the mean-harmonic function $\Phi$ defined in \eqref{E:Phi} along the spine.

We denote first by $\mathcal C \coloneqq \{k \geq 1 : \nu(k) >0\}$ the support of the reproduction law. We think of any $k\in \mathcal C$ as a color,
and add a special color denoted by $\star$. We define an urn process with balls having colors in $\mathcal C \cup \{\star\}$ as follows. Imagine that a ball with color $k\in \mathcal C$ has activity $qk$, meaning that the probability that it is picked at some random drawing from the urn is proportional to $qk$, whereas a ball with color $\star$ has activity $(1-q) m_\GW$.

At the initial  time $n=0$, the urn contains one ball with color $\ell$ and one ball with color $\star$.
At each step $n\geq 2$, a ball is drawn at random in the urn with probability proportional to its activity. The ball is then replaced in the urn and two new balls are added to the urn. If the color of the sample ball is  $j \in \mathcal C$, then the first new ball has the color $j$ and the second the color $\star$. If the sampled ball has color $\star$, then the first new ball has the color $\star$ and the color of the second is sampled according to the law size-biased reproduction law $\hat{\boldsymbol{\nu}}$. Write $X_n$ the label of the non-$\star$ ball added to the urn at time $n$. We also set  $X_0=\ell$ as we initiate the urn with a ball with color $\ell$ and a ball with color $\star$.

\begin{lemma} \label{L:urn&spine}
\label{lem:urn} The sequence  $(X_n, n \geq 0)$ of colors added to the urn as  above has  the same law as $ (\xi_n, n \geq 0) $ under $\hat{\mathcal P}_{\delta_\ell}$.
\end{lemma}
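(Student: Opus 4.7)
The claim reduces to checking that the conditional law of $X_{n+1}$ given $(X_0,\ldots,X_n)$ in the urn coincides with the conditional law of $\xi_{n+1}$ given $(\xi_0,\ldots,\xi_n)$ under $\hat{\mathcal P}_{\delta_\ell}$; the common initial condition $X_0=\xi_0=\ell$ and an induction on $n$ then conclude. No clever step is needed; the argument is a direct computation.

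The first key step is to describe the urn composition after $n$ draws. Since at each step exactly one new ball of color $\star$ and one new ball of some color $X_k\in\mathcal{C}$ are added, and since the urn starts with one ball of color $\ell=X_0$ and one of color $\star$, after $n$ draws the urn contains $n+1$ balls of color $\star$ together with, for every $k\in\mathcal{C}$, exactly
\[
N_k(n)\coloneqq\sum_{j=0}^{n}\ind{X_j=k}
\]
balls of color $k$. Identifying $X_j$ with $\xi_j$ gives $N_k(n)=\boldsymbol{\tau}_n(\{k\})$ and $\sum_{k\in\mathcal{C}}N_k(n)=n+1=|\boldsymbol{\tau}_n|$.

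The second step is to compute the conditional law of $X_{n+1}$ by partitioning on the color of the drawn ball. The total activity at time $n$ is
\[
A_n=(1-q)m_\GW(n+1)+q\sum_{k\in\mathcal{C}}kN_k(n).
\]
Combining the two possibilities (drawing a color-$j$ ball and adding another one, or drawing the $\star$-ball and then sampling the added color from $\hat{\boldsymbol{\nu}}$) yields, via $\hat{\nu}(j)=j\nu(j)/m_\GW$,
\[
\mathbb{P}(X_{n+1}=j\mid\mathcal F_n)=\frac{qjN_j(n)+(1-q)(n+1)j\nu(j)}{A_n}=\frac{j\bigl[(1-q)\nu(j)+q\boldsymbol{\tau}_n(j)/(n+1)\bigr]}{A_n/(n+1)}.
\]
By \eqref{E:m_t} the denominator $A_n/(n+1)$ is precisely $m_{\boldsymbol{\tau}_n}$, so the right-hand side equals $\hat{\pi}_{\boldsymbol{\tau}_n}(j)$, the size-biased reproduction law along the spine. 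The base case $n=0$ follows from the same computation restricted to the two initial balls: $A_0=q\ell+(1-q)m_\GW=m_{\delta_\ell}$, and one recovers $\hat{\pi}_{\delta_\ell}$.

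There is no genuine obstacle here; the proof is essentially bookkeeping. The only subtle point is the convention $X_0=\ell$, which ensures that the initial color-$\ell$ ball is counted in $N_\ell(\cdot)$, so that $|\boldsymbol{\tau}_n|=n+1$ and the normalization $A_n/(n+1)=m_{\boldsymbol{\tau}_n}$ aligns correctly with the definition of the size-biased law $\hat{\boldsymbol{\pi}}_{\boldsymbol{\tau}_n}$.
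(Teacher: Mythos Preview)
Your proof is correct and follows essentially the same approach as the paper: both arguments compute the conditional law $\P(X_{n+1}=j\mid X_0,\ldots,X_n)$ by partitioning on whether the drawn ball has color $j$ or color $\star$, use $\hat{\nu}(j)=j\nu(j)/m_\GW$ to simplify, and identify the result with $\hat{\pi}_{\boldsymbol{\tau}_n}(j)$ via the identity $A_n/(n+1)=m_{\boldsymbol{\tau}_n}$ from \eqref{E:m_t}. Your explicit remark on the convention $X_0=\ell$ and the normalization is a helpful clarification, but the underlying computation is the same.
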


\begin{proof}For all $n\geq 0$ and $j\in \mathcal C \cup \{\star\}$, we denote by $N_n(j)$ the number of balls with color $j$ in the urn after $n$ steps. It is plain
\[
  N_n(\star) = \sum_{j \in \mathcal C} N_n(j) = n+1.
\]
Moreover, for any $j \in \mathcal C$, we have
\[
  \P(X_{n+1} = j|X_0,\ldots,X_n)\\ = \frac{(1-q)m_\GW N_n(\star)\hat{\nu}(j)+  j qN_n(j)}{ (1-q)m_\GW N_n(\star)+q\sum_{i \in \mathcal C} i N_n(i) } ,
\]
since  $\{X_{n+1} = j\}$ is the event that at time $n+1$, either a ball labelled $j$ was sampled, or a ball labelled $\star$ and the extra ball added was labelled $j$.
Since  $j \nu(j) =   m_\GW \hat{\nu}(j)$, this  quantity can be rewritten as
\begin{align*}
  \P(X_{n+1} = j|X_0,\ldots,X_n) &= \frac{ (1-q)  (n+1)j\nu(j)+qjN_n(j) }{ (1-q)m_\GW (n+1)+q\sum_{i \in \mathcal C} i N_n(i) }\\
  &= \hat{\pi}_{\boldsymbol{\tau}_n}(j),
\end{align*}
where $\boldsymbol{\tau}_n = \sum_{j=0}^n \delta_{X_j}$. This proves that the dynamics of $X$ are identical to those of $\xi$ under law $\hat{\mathcal P}_{\delta_\ell}$.
\end{proof}

We next study the asymptotic behaviour of $X_n$ as $n \to \infty$ by applying general results of Athreya and Karlin \cite{AK} and Janson \cite{Jan04} on generalized P\' olya urns. Recall that $N_n(j)$ denotes the number of balls with color $j\in \mathcal C \cup \{\star\}$ in the urn after $n$ steps. By \cite[Section 4.2]{AK} or  \cite[Theorem 3.21]{Jan04}, there exists a constant $c > 0$ such that for any $j\in \mathcal C\cup\{\star\}$,
\[
  \lim_{n \to\infty}\frac{N_n(j)}{n} =c\lambda_1 {v}_1(j) \quad \text{a.s.,}
\]
where $\lambda_1$ is the leading eigenvalue and $\mathbf{{v}}_1=(v_1(j))$ an associated left-eigenvector of the  matrix $A=(A_{i,j})_{i,j\in  \mathcal{C} \cup \{\star\}}$ is defined for $i,j\in  \mathcal{C}$  by
\begin{equation}
  \label{eqn:R}
  A_{i,j} = q i \delta_{i,j}, \quad A_{i,\star} = q i, \quad A_{\star,j} = (1-q) j \nu(j), \quad A_{\star,\star} = (1-q)m_\GW.
\end{equation}
Roughly speaking, the matrix $A$ is mean replacement matrix re-weighted by activities.
Beware that Janson \cite{Jan04} rather uses the notation $A$ for the transposed of our matrix; in particular left-eigenvectors $\mathbf{{v}}$ in our setting correspond to right-eigenvectors in \cite{Jan04}, and vice-versa. The following spectral properties of $A$ are the key to the analysis.

\begin{lemma}
\label{lem:diagonalize}
The eigenvalues of the matrix $A$ defined in \eqref{eqn:R} are all simple, real and nonnegative. They are given by $0$ and the $\#\mathcal{C}$ positive solutions of the equation
\begin{equation}
 \label{eqn:defEigenvectors}
  \sum_{j=1}^\ks \frac{(1-q) j \nu(j)}{x - qj} = 1, \qquad \frac{x}{q}\in\R\backslash \mathcal{C}.
\end{equation}
Moreover, a left-eigenvector  $\mathbf{{v}}_\lambda$ associated to an eigenvalue $\lambda\geq 0$ is given by
$${v}_\lambda(i) = \frac{(1-q) i \nu(i)}{\lambda - qi}\quad \text{for $ i\in \mathcal C$, and } {v}_\lambda(\star) = 1,$$
and similarly a right-eigenvector  $\mathbf{{u}}_\lambda$ associated to an eigenvalue $\lambda\geq 0$ is given by
$${u}_\lambda(i) = \frac{qi}{\lambda - qi}\quad \text{for $ i\in \mathcal C$, and } {u}_\lambda(\star) = 1. $$
\end{lemma}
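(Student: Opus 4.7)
My plan proceeds in three steps: (i) verify directly that the proposed formulas give eigenvectors of $A$ for each $\lambda$ satisfying the stated conditions; (ii) analyze the function $f(x) = \sum_{j\in\mathcal{C}} (1-q)j\nu(j)/(x-qj)$ to count its $\#\mathcal{C}$ positive roots; (iii) conclude by a dimension argument that these $\#\mathcal{C}+1$ values exhaust the spectrum and that each eigenvalue is simple.

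For step (i), I fix $\lambda \geq 0$ with $\lambda/q \notin \mathcal{C}$ and plug into $A\mathbf{u}_\lambda = \lambda\mathbf{u}_\lambda$. On coordinates $i \in \mathcal{C}$, a one-line calculation gives $(A\mathbf{u}_\lambda)_i = qi \cdot qi/(\lambda - qi) + qi \cdot 1 = \lambda\, qi/(\lambda-qi) = \lambda\, u_\lambda(i)$. The only subtlety lies in the $\star$-coordinate, where I would use the elementary identity $qj/(\lambda-qj) = -1 + \lambda/(\lambda-qj)$ together with $\sum_{j} j\nu(j) = m_\GW$ to telescope
\begin{equation*}
(A\mathbf{u}_\lambda)_\star = \sum_{j\in\mathcal{C}} (1-q)j\nu(j)\frac{qj}{\lambda-qj} + (1-q)m_\GW = \lambda\, f(\lambda).
\end{equation*}
This equals $\lambda = \lambda\, u_\lambda(\star)$ in exactly two cases: when $\lambda = 0$ (which produces the eigenvector $u_0(i) = -1$ for $i\in\mathcal{C}$, $u_0(\star)=1$), and when $f(\lambda)=1$, that is, when \eqref{eqn:defEigenvectors} holds. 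The verification that $\mathbf{v}_\lambda A = \lambda\mathbf{v}_\lambda$ is strictly parallel, with the same telescoping playing the same role.

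For step (ii), I will carry out a monotonicity analysis on $(0,\infty)\setminus q\mathcal{C}$. Writing $\mathcal{C} = \{j_1 < \cdots < j_r\}$ with $r = \#\mathcal{C}$, the function $f$ is smooth and strictly decreasing on each interval $(qj_k, qj_{k+1})$ for $k < r$, with boundary limits $+\infty$ at $qj_k^+$ and $-\infty$ at $qj_{k+1}^-$; on $(qj_r, \infty)$ it decreases strictly from $+\infty$ to $0$; and on $(0, qj_1)$ every summand is negative. Hence $f(x) = 1$ has exactly $r$ positive solutions, and these automatically avoid $q\mathcal{C}$. Together with $\lambda = 0$, this produces $r+1$ pairwise distinct nonnegative real eigenvalues of $A$, each simple since $A$ has size $(r+1)\times(r+1)$. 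A short auxiliary check rules out any $\lambda = qi_0$ with $i_0 \in \mathcal{C}$: the $i_0$-equation forces $u(\star) = 0$, after which the remaining equations force $u \equiv 0$, justifying the exclusion $x/q \notin \mathcal{C}$ in \eqref{eqn:defEigenvectors}. The main (and quite mild) obstacle is spotting the telescoping identity in step (i); everything else is elementary monotonicity and dimension counting.
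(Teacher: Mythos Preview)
Your proposal is correct and follows essentially the same approach as the paper's proof: direct verification of the eigenvector formulas via the telescoping identity $qj/(\lambda-qj)=-1+\lambda/(\lambda-qj)$, a monotonicity analysis of $f$ on the intervals between consecutive poles to count exactly $\#\mathcal{C}$ positive roots, and a final dimension count to conclude simplicity and completeness of the spectrum. The only cosmetic differences are that the paper verifies the left-eigenvectors first and omits your auxiliary exclusion of $\lambda\in q\mathcal{C}$ (which is in any case redundant once the dimension count is in place).
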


\begin{proof}
We first remark that for each consecutive elements $i,j$ of $\mathcal{C}$, the function
\[
  x \mapsto  \sum_{j=1}^\ks \frac{(1-q) j \nu(j)}{x - qj}, \qquad \frac{x}{q}\in\R\backslash \mathcal{C},
\]
is decreasing from $\infty$ to $-\infty$ on the interval $(qi,qj)$. Additionally, this function is also decreasing on $(-\infty,q\min \mathcal{C})$ while staying non-positive, and is decreasing on $(q \ks,\infty)$ from $\infty$ to $0$. Consequently, there are exactly $\# \mathcal{C}$ roots to the equation \eqref{eqn:defEigenvectors}, all being positive.

We then consider any solution, say $\lambda>0$,  to \eqref{eqn:defEigenvectors}
and  check by immediate computations that
 $(\lambda,\mathbf{{v}}_\lambda)$ as defined in the statement are eigenvalues and associated left-eigenvectors for the matrix $A$.
 Indeed,  we have first for any $j \in \mathcal{C}$,
\[
  \sum_{i \in \mathcal{C} \cup \{\star\}} {v}_\lambda(i) A_{i,j} = qj  \frac{(1-q) j \nu(j)}{\lambda - qj} +(1-q) j \nu(j) = \lambda {v}_\lambda(j),
\]
and then  for $j = \star$,
\begin{align*}
  \sum_{i \in \mathcal{C} \cup \{\star\}} {v}_\lambda(i) A_{i,\star} &= (1-q) m_\GW +\sum_{i=1}^\ks   qi   \frac{(1-q) i \nu(i)}{\lambda - qi} \\
  &= (1-q) \sum_{i=1}^\ks \left(  i \nu(i) + qi  \frac{i \nu(i)}{\lambda-qi} \right)\\
  & = (1-q) \sum_{i=1}^\ks {i \nu(i)} \frac{\lambda}{\lambda - qi}  \\
  &= \lambda ,
\end{align*}
where the ultimate equality uses that $\lambda$ solves \eqref{eqn:defEigenvectors}.

We check by similar calculations for $(0,\mathbf{{v}}_0)$ that
\[
  \sum_{i \in \mathcal{C} \cup \{\star\}} {v}_0(i) A_{i,j} =0 \qquad \text{for all } j\in \mathcal C \cup\{\star\}. \]
 Finally, we verify in the same way that the $\mathbf{{u}}_\lambda$ are also right-eigenvectors.

As a conclusion, we have found $\#\mathcal{C}+1$ different real eigenvalues of the $(\#\mathcal{C}+1) \times (\#\mathcal{C}+1)$ matrix $A$. They are hence all simple and there exist no further eigenvalues.
\end{proof}

We order the positive eigenvalues of the mean replacement matrix $A$ in the decreasing order, $\lambda_1>\lambda_2> \ldots > \lambda_{\#\mathcal{C}}$
and then write simply $\mathbf{{v}}_i=\mathbf{{v}}_{\lambda_i}$ and $\mathbf{{u}}_i=\mathbf{{u}}_{\lambda_i}$ for the corresponding left and right eigenvectors given in Lemma~\ref{lem:diagonalize}. Note that $\mathbf{{u}}_1$  and $\mathbf{{v}}_1$ have positive coordinates (as it should be expected from Perron-Frobenius' theorem), and that we did not impose the usual normalization that their scalar product should be $1$, for the sake of simplicity. It is a well-known fact  that the first order asymptotic behaviour of $N_n$ is determined by  $\lambda_1$ and $\mathbf{{v}}_1$, whereas the fluctuations depend on the sign of $\lambda_2 - \lambda_1/2$. More precisely, we first apply \cite[Theorem 3.21]{Jan04}, to obtain the asymptotic behavior of the number  $N_n(j)$ of balls with color $j$ after $n$ steps as $n \to \infty$.

\begin{lemma} \label{L:Perron} We have for all $j \in \mathcal C$ that
\[
  \lim_{n \to \infty} \frac{N_n(j)}{n} = v_1(j)= \frac{(1-q) j \nu(j)}{\lambda_1 - qj}, \qquad \hat{\mathcal P}_{\delta_\ell}\text{-a.s.}
\]
\end{lemma}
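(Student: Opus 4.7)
The plan is to apply the convergence theorem for generalized P\'olya urns that is already quoted in the text immediately above Lemma~\ref{L:Perron}, and then pin down the normalization constant by a trivial deterministic relation.

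First, I would verify that the hypotheses of Janson's Theorem 3.21 are met by the urn constructed before Lemma~\ref{L:urn&spine}. The urn is balanced (exactly two balls are added per draw), the activities are strictly positive (since $q\in(0,1)$ and $m_\GW>0$), and the mean replacement matrix $A$ defined in \eqref{eqn:R} is irreducible: every color $j\in\mathcal{C}$ communicates with $\star$ in one step (the second ball added is always $\star$), and conversely from $\star$ every $j\in\mathcal C$ is reached with probability $\hat\nu(j)>0$. By Lemma~\ref{lem:diagonalize} the leading eigenvalue $\lambda_1$ is simple, and the associated left-eigenvector $\mathbf{v}_1$ has strictly positive entries—positivity follows because the analysis in that lemma places $\lambda_1$ in the interval $(q\ks,\infty)$, so $\lambda_1-qj>0$ for every $j\in\mathcal C$. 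Consequently the general theorem applies and yields a single constant $c>0$, independent of the color, such that
$$
\lim_{n\to\infty} \frac{N_n(j)}{n} = c\lambda_1 v_1(j) \qquad \hat{\mathcal P}_{\delta_\ell}\text{-a.s., for every } j\in\mathcal C\cup\{\star\}.
$$

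Next, I would fix the product $c\lambda_1$ by specializing to $j=\star$. Recall from the proof of Lemma~\ref{L:urn&spine} the deterministic identity $N_n(\star)=n+1$, and from Lemma~\ref{lem:diagonalize} the normalization $v_1(\star)=1$. Dividing $N_n(\star)$ by $n$ and letting $n\to\infty$ in the displayed limit above yields $c\lambda_1=1$. Substituting this value back and plugging in the explicit formula $v_1(j)=(1-q)j\nu(j)/(\lambda_1-qj)$ from Lemma~\ref{lem:diagonalize} delivers the claim for every $j\in\mathcal C$.

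There is essentially no obstacle: the spectral analysis of Lemma~\ref{lem:diagonalize} and the quoted P\'olya urn convergence theorem already supply the limit in the desired form, and the proof only amounts to identifying the free constant via the $\star$-coordinate, which is forced by the balanced structure of the urn.
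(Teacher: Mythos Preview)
Your proposal is correct and follows essentially the same argument as the paper: apply Janson's Theorem~3.21 to obtain $N_n(j)/n\to c\lambda_1 v_1(j)$ a.s., then use the deterministic identity $N_n(\star)=n+1$ together with $v_1(\star)=1$ to identify $c\lambda_1=1$. The only difference is that you spell out the verification of the hypotheses (irreducibility, positivity of activities and of $\mathbf v_1$) in slightly more detail than the paper does.
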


\begin{proof} The  matrix $A$ is irreducible and aperiodic, and
Lemma~\ref{lem:diagonalize} entails that the conditions (A1--6) of \cite{Jan04} are satisfied. By \cite[Theorem 3.21]{Jan04}, there exists $c > 0$ such that for all $j \in \mathcal{C} \cup \{\star\}$
\[
  \lim_{n \to \infty} \frac{N_n(j)}{n} = c \lambda_1 {v}_1(j) \quad \text{a.s.}
\]
Moreover, as $N_n(\star) = n+1$ a.s., we have $N_n(\star)/n \to 1$ a.s. As $ {v}_1(\star) = 1$,  we conclude that $c = 1/\lambda_1$, which completes the proof.
\end{proof}

\begin{corollary}\label{C:spinecv}
Assume that $q\ks < 1$.
\begin{itemize}
  \item[(i)] The principal eigenvalue $\lambda_1>1$ if and only if
\eqref{eqn:supercritical} holds.
In that case, the series $\sum_{k=0}^{\infty} \Phi(\varsigma(n))$ converges a.s.

 \item[(ii)] The principal eigenvalue $\lambda_1<1$ if and only if \eqref{eqn:subcritical} holds.
In that case, we have $\lim_{n\to \infty}\Phi(\varsigma(n)) = \infty$ a.s. \end{itemize}
\end{corollary}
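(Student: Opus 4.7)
The plan decomposes into a spectral observation and an almost-sure analysis along the spine. For the equivalences, I would work directly from Lemma~\ref{lem:diagonalize}: the positive eigenvalues of $A$ are the solutions of $f(x)=1$, where $f(x)\coloneqq\sum_{j=1}^{\ks}\frac{(1-q)j\nu(j)}{x-qj}$. The monotonicity analysis already carried out in that proof shows that $f$ decreases strictly from $+\infty$ to $0$ on $(q\ks,\infty)$, while on each bounded interval between consecutive points of $q\mathcal{C}$ it decreases from $+\infty$ to $-\infty$. The root in $(q\ks,\infty)$ is therefore strictly larger than every other positive root, and hence equals $\lambda_1$. Since the standing assumption $q\ks<1$ places $1$ inside $(q\ks,\infty)$, strict monotonicity of $f$ there gives $\lambda_1>1\Longleftrightarrow f(1)>1$, which is \eqref{eqn:supercritical}, and analogously $\lambda_1<1\Longleftrightarrow f(1)<1$, which is \eqref{eqn:subcritical}.

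The asymptotic statements then rest on the key identification
\[
\lim_{n\to\infty}m_{\boldsymbol{\tau}_n}=\lambda_1\qquad\hat{\mathcal P}_{\delta_\ell}\text{-a.s.}
\]
Using Lemma~\ref{L:urn&spine} to couple the spine reproduction sequence $(\xi_n)$ with the urn process, Lemma~\ref{L:Perron} gives $\tau_n(j)/n\to v_1(j)=(1-q)j\nu(j)/(\lambda_1-qj)$ almost surely for every $j\in\mathcal{C}$. Plugging these limits into the expression \eqref{E:m_t} and invoking the eigenvalue identity $\sum_j(1-q)j\nu(j)/(\lambda_1-qj)=1$, a short algebraic manipulation (structurally the same as the one already performed in Lemma~\ref{lem:diagonalize} when verifying the $\star$-column of $\mathbf{v}_{\lambda_1}A=\lambda_1\mathbf{v}_{\lambda_1}$) reduces the limit of $(1-q)m_{\GW}+\frac{q}{n+1}\sum_j j\,\tau_n(j)$ exactly to $\lambda_1$.

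With this limit in hand, the conclusion follows immediately from the Cesàro lemma applied to $\log m_{\boldsymbol{\tau}_k}$: since $\Phi(\varsigma(n))=\prod_{k=0}^{n-1}m_{\boldsymbol{\tau}_k}^{-1}$, one obtains
\[
\frac{1}{n}\log\Phi(\varsigma(n))\;\longrightarrow\;-\log\lambda_1\qquad\hat{\mathcal P}_{\delta_\ell}\text{-a.s.}
\]
Under \eqref{eqn:supercritical} this gives exponential decay of $\Phi(\varsigma(n))$ at asymptotic rate $\lambda_1^{-1}<1$, so that $\sum_n\Phi(\varsigma(n))<\infty$ a.s.\ by the root test; under \eqref{eqn:subcritical} the rate exceeds $1$ and $\Phi(\varsigma(n))\to+\infty$ a.s. The only substantive step is the identification $m_{\boldsymbol{\tau}_n}\to\lambda_1$, and I do not anticipate any analytical obstacle there, since $\mathcal{C}$ is finite and the computation is purely algebraic once Lemma~\ref{L:Perron} is in force; the remainder is routine monotonicity and Cesàro.
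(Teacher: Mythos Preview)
Your proposal is correct and follows essentially the same route as the paper's proof: the equivalences are read off from the monotonicity of $x\mapsto\sum_j(1-q)j\nu(j)/(x-qj)$ on $(q\ks,\infty)$ exactly as you do, and the almost-sure asymptotic $m_{\boldsymbol{\tau}_n}\to\lambda_1$ is obtained via Lemmas~\ref{L:urn&spine} and~\ref{L:Perron} plugged into \eqref{E:m_t}, yielding $\log\Phi(\varsigma(n))\sim -n\log\lambda_1$. The only cosmetic difference is that you name the Ces\`aro step and the root test explicitly, whereas the paper leaves these implicit.
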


\begin{proof}
The equivalences
$$\lambda_1 >1 \Longleftrightarrow \sum_{j =1}^{\ks} \frac{(1-q)j\nu(j)}{1-qj} > 1,$$
and
$$\lambda_1 <1 \Longleftrightarrow \sum_{j =1}^{\ks} \frac{(1-q)j\nu(j)}{1-qj} <1,$$
should be plain from Lemma~\ref{lem:diagonalize}. Indeed, the only eigenvalue greater than $q \ks < 1$ is the Perron-Frobenius principal eigenvalue $\lambda_1$, and we have seen  that the function  $x \mapsto \sum_{j =1}^{\ks} \frac{(1-q)j\nu(j)}{1-qj}$ decreases on $[1,\infty)$.

Next, recall that the type $\boldsymbol{\tau}_n$ of the individual $\varsigma(n)$ is  $\boldsymbol{\tau}_n = \sum_{j=0}^{n}\delta_{\xi_{j}}$,
where $\xi_k$ stands the number of children of $\varsigma(k-1)$. We know moreover from Lemma~\ref{L:urn&spine} that the sequence $(\xi_k)_{k\geq 0}$ has the same distribution under $\hat{\mathcal P}_{\delta_\ell}$ as the sequence in $\mathcal C$ of the colors of the balls added to the urn at each step.
We deduce from Lemma~\ref{L:Perron} that $\hat{\mathcal P}_{\delta_\ell}$-a.s., there is the convergence
$$\lim_{n\to \infty} \frac{ {\tau}_n (j)}{ |\boldsymbol{\tau}_n |}=  v_1(j), \qquad \text{for all }j\in \mathcal C.$$
We then get from \eqref{E:m_t} that
\begin{align*}\lim_{n\to \infty} m_{\boldsymbol{\tau}_n}
&= (1-q) \sum_{j=1}^{\ks} j \nu(j) + q \sum_{j=1}^{\ks} \frac{(1-q) j^2 \nu(j)}{ \lambda_1 - qj}\\
&= (1-q) \sum_{j =1}^{\ks} \frac{  \lambda_1 j\nu(j)}{\lambda_1-qj} \\
&= \lambda_1,
\end{align*}
where the last equality is due to the fact that $\lambda_1$ solves \eqref{eqn:defEigenvectors}.
Last, we deduce from the very definition \eqref{E:Phi} of $\Phi$ that as $n\to \infty$,
$$\log \Phi(\varsigma(n)) \sim -n \log \lambda_1.$$
As a consequence,   the series $\sum_{k=0}^{\infty} \Phi(\varsigma(n))$ converges a.s. whenever $\lambda_1 >1$, whereas $\lim_{n\to \infty} \Phi(\varsigma(n))= \infty$ a.s. whenever $\lambda_1<1$.
\end{proof}

We have now all the ingredients needed to establish  Proposition~\ref{P:ui}.

\subsection{Proof of Proposition~\ref{P:ui}}
\label{sec:reinforcedPerpetuity}
The starting point of the proof  is the following well-known observation due to Durrett \cite[Theorem 4.3.5]{Dur19} (see also \cite[Theorem 12.1]{BiK04}).
Using that $M_0 = 1$, and writing $M_\infty = \lim_{n \to \infty} M_n$, we have
\begin{equation}
  \label{eqn:durretFormula}
  \mathcal{E}_\ell (M_\infty)  = \bar{\mathcal{P}}_{\delta_\ell} (M_\infty < \infty).
\end{equation}
Recall that the tilted law $\bar{\mathcal{P}}_{\delta_\ell}$ has been introduce in Section 5.2, and that the spine decomposition of the reinforced Galton-Watson process has been described in Proposition~\ref{P:spinedec}

It is worth noting that $M$ is a non-negative martingale under $\mathcal{P}_{\delta_\ell}$, and $1/M$ a non-negative super-martingale under $\bar{\mathcal{P}}_{\delta_\ell}$, hence the convergence of $M$ is immediate under the original and tilted laws. As a result, to prove Proposition~\ref{P:ui}(i), it is enough to study the convergence of $M$ under the richer law $\hat{\mathcal P}_{\delta_\ell}$ by Proposition~\ref{P:spinedec}. Specifically, almost sure finiteness of $M_\infty$ under $\hat{\mathcal{P}}_{\delta_\ell}$ implies uniform integrability under $\mathcal{P}_{\delta_\ell}$, while if $M_\infty = \infty$ $\hat{\mathcal{P}}_{\delta_{\ell}}$-a.s., then $M_\infty = 0$ $\mathcal{P}_{\delta_\ell}$-a.s.

(i)  We denote by $\mathcal{Y} = \sigma(\xi_k, k \geq 0)$ the sigma-field of the information on the number of children of all spine particles. The spinal decomposition yields

\[
  \hat{\mathcal E}_{\ell}(M_n|\mathcal{Y}) = \Phi(\varsigma(n)) + \sum_{k=0}^{n-1}(\xi_k-1)\Phi (\varsigma(k)), \qquad \hat{\mathcal P}_{\delta_\ell}\text{-a.s.},
\]
using that the individual  $\varsigma(k-1)$ at generation $k-1$ on the spine has $\xi_k$ children, one of those chosen as the individual on the spine at generation $k$ and the $\xi_k-1$
other evolve independently according to the law $\mathcal P_{\mathbf{t}(\varsigma(k))}$.
Since $\xi_k-1\leq \ks$, we have \textit{a fortiori} that
\[
  \hat{\mathcal E}_{\ell}(M_n|\mathcal{Y}) \leq \ks \sum_{k=0}^\infty \Phi (\varsigma(k))), \qquad \hat{\mathcal P}_{\delta_\ell}\text{-a.s.}
\]
We can now conclude from Corollary~\ref{C:spinecv} and the conditional version of the Fatou lemma that if
\[
  \sum_{j =1}^{\ks} \frac{(1-q)j\nu(j)}{1-qj} > 1,
\]
then $ \liminf_{n \to \infty} M_n < \infty$, $\hat{\mathcal P}_{\delta_\ell}$-a.s. As a result, by Proposition~\ref{P:spinedec} and \eqref{eqn:durretFormula} we have $\mathcal{E}_\ell (M_\infty) =1$, and by the Scheff\'e lemma,
$M$ is uniformly integrable under $\mathcal{P}_{\delta_\ell}$.

(ii) The simple observation that $M_n \geq \Phi(\varsigma(n))$ combined with Corollary~\ref{C:spinecv}(ii) enables us to conclude that  $\hat{\mathcal{P}}_{\delta_\ell}(M_\infty = \infty) =1$ whenever \eqref{eqn:subcritical} holds. Using again Proposition~\ref{P:spinedec} and \eqref{eqn:durretFormula}, the proof is now complete.

\subsection{Short discussion of the critical case}
The proof of Proposition~\ref{P:ui} in the preceding section relies on the study of
the quantity  $\sum_{k = 0}^\infty \Phi(\varsigma(n))$, which can be thought of as a particular case of \emph{reinforced perpetuity}.
We have seen above  that this series converges when $\lambda_1>1$ whereas $\lim_{n\to\infty} \Phi(\varsigma(n))=\infty$ when $\lambda_1<1$.
We now conclude this work by discussing briefly and a bit informally the critical case when the principal eigenvalue is $\lambda_1=1$, which presents interesting complexities. Recall from Corollary~\ref{C:spinecv} that this is equivalent to
$$  \sum_{j =1}^{\ks} \frac{(1-q)j\nu(j)}{1-qj} = 1.$$

We  shall need the following result (that however does not requires $\lambda_1=1$) about the fluctuations of the convergence in Lemma~\ref{L:Perron}.
\begin{lemma}
\label{lem:secondTerm} \begin{itemize}
\item[(i)] \textrm{(Heavy urn)}
If $\lambda_2>\lambda_1/2$, then there exists a square integrable non-degenerate random variable $W_2$ such that
for all $j \in \mathcal{C}$, \[
  \lim_{n \to \infty} n^{-\lambda_2/\lambda_1} (N_n(j) - n \lambda_1 {v}_{\lambda_1}(j)) = W_2 v_2(j), \quad \hat{\mathcal P}_{\delta_\ell}\text{-a.s.}
\]
\item[(ii)] \textrm{(Light urn)}
If $\lambda_2<\lambda_1/2$, then there is the joint convergence in distribution for all $j\in \mathcal{C}$
\[
  \lim_{n \to \infty} n^{-1/2} \left(N_{[nt]}(j) - nt \lambda_1 {v}_{\lambda_1}(j))\right)_{t\geq 0} = (G_t(j))_{t\geq 0}, \quad \text{in law},
\]
where $\mathbf G=(G(j))_{j\in \mathcal C}$ is some continuous centered $\#\mathcal C$-dimensional Gaussian process .
\end{itemize}
\end{lemma}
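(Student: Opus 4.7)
Both parts are instances of the general limit theorems for generalized P\'olya urns; essentially no new probabilistic input is needed beyond what we have already assembled in Lemmas~\ref{lem:diagonalize} and~\ref{L:Perron}. The plan is to translate our urn into Janson's framework \cite{Jan04} and invoke his higher-order results. The preliminary checks (irreducibility and aperiodicity of the mean replacement matrix~$A$, conditions (A1)--(A6), as well as the existence of all moments of the replacement law since it is supported on a finite set) have already been performed in the proof of Lemma~\ref{L:Perron}. Crucially, Lemma~\ref{lem:diagonalize} tells us that the spectrum of $A$ is real and simple, so the small/large dichotomy of \cite{Jan04} is governed exactly by the sign of $\lambda_2-\lambda_1/2$ with no interference from complex eigenvalues.

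For part (i), the heavy urn regime $\lambda_2>\lambda_1/2$, I would apply \cite[Theorem~3.24]{Jan04}. Using biorthogonality of the eigenbases of $A$ produced in Lemma~\ref{lem:diagonalize}, that theorem asserts that the scalar process
\[
n^{-\lambda_2/\lambda_1}\,\bigl\langle \mathbf{v}_2,\, \mathbf{N}_n - n\lambda_1 \mathbf{v}_1\bigr\rangle
\]
is asymptotically a martingale and converges almost surely, and in $L^2$, to a square integrable random variable $W_2$. Non-degeneracy of $W_2$ comes from the fact that the associated quadratic variation is strictly positive in the irreducible setting (again a general statement in \cite[Theorem~3.24]{Jan04}). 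To turn this scalar limit into the componentwise statement of the lemma, I would project $\mathbf{N}_n - n\lambda_1 \mathbf{v}_1$ on the eigenbasis and observe that contributions coming from $\lambda_i$ with $i\geq 3$ are of order $n^{\lambda_i/\lambda_1}=o(n^{\lambda_2/\lambda_1})$, so that for each coordinate $j\in \mathcal{C}$ only the $\lambda_2$-component survives and yields $W_2\, v_2(j)$ after suitable normalization.

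For part (ii), the light urn regime $\lambda_2<\lambda_1/2$, I would invoke the functional CLT \cite[Theorem~3.31]{Jan04}, which under (A1)--(A6) and precisely the assumption $\lambda_2<\lambda_1/2$ (with all eigenvalues real) gives the convergence
\[
\bigl( n^{-1/2}(\mathbf{N}_{[nt]}-nt\lambda_1 \mathbf{v}_1) \bigr)_{t\geq 0} \xrightarrow[n\to\infty]{\mathrm{law}} (\mathbf{G}_t)_{t\geq 0}
\]
on Skorokhod space, to a continuous centred Gaussian process whose covariance is given by the explicit integral formula in \cite[Theorem~3.31(ii)]{Jan04}. Restricting to coordinates $j\in\mathcal{C}$ yields the announced joint convergence of $(G_t(j))_{j\in\mathcal{C}}$.

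The only real obstacle is the bookkeeping involved in matching conventions: Janson's ``activity'' weights correspond to our $(qj)_{j\in\mathcal{C}}$ together with $(1-q)m_\GW$ for the colour $\star$, and his matrix $A$ is our transpose, so left/right eigenvectors swap roles. Once this translation is performed, both statements follow directly, and no further probabilistic argument is required.
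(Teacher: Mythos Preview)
Your proposal is correct and follows exactly the paper's approach: the paper's proof is a one-line invocation of \cite[Theorems~3.24 and~3.31]{Jan04}, relying on the fact (from Lemma~\ref{lem:diagonalize}) that $\lambda_2$ is simple and $A$ has no non-real eigenvalues. Your additional explanation about projecting onto the eigenbasis and handling the transpose convention is helpful elaboration, but the core argument is identical.
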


\begin{proof}
This is a direct application of \cite[Theorem 3.24 and Theorem 3.31]{Jan04}, using that $\lambda_2$ is a simple eigenvalue with left-eigenvector $\mathbf{{v}}_2$, and that $A$ has no non-real eigenvalues.
\end{proof}

In our setting, we now see that if $\lambda_1=1$ and $\lambda_2>1/2$, then as $n\to \infty$,
$$m_{\boldsymbol{\tau}_n} =1+ n^{\lambda_2-1} W_2\sum_{j=1}^\ks \frac{(1-q)j^2\nu(j)}{\lambda_2-qj} + o(n^{\lambda_2-1}), \qquad \text{ $ \hat{\mathcal P}_{\delta_\ell}$-a.s.}$$
Moreover, we remark that
\begin{align*}
  \sum_{j=1}^\ks \frac{(1-q)j^2 \nu(j)}{\lambda_2- qj} &= \frac{1}{q} \left( \sum_{j=1}^\ks \frac{(1-q)j(qj-\lambda_2)\nu(j)}{\lambda_2 - q j} + \lambda_2 \sum_{j=1}^\ks \frac{(1-q)j \nu(j)}{\lambda_2 - q  j} \right)\\
  &= \frac{\lambda_2- (1-q)m_\GW}{q},
\end{align*}
using that $\lambda_2$ solves \eqref{eqn:defEigenvectors}.
As a consequence, on the event
$$\left \{W_2 (\lambda_2- (1-q)m_\GW)>0\right \},$$
the series $\sum_{k = 0}^\infty \Phi(\varsigma(n))$ converges and
the terminal value $M_{\infty}<\infty$.
At the opposite,  on the event
$$\left \{W_2(\lambda_2- (1-q)m_\GW)<0 \right \},$$
the series $\sum_{k = 0}^\infty \Phi(\varsigma(n))$  diverges.

We remark that $W_2$ is a random variable that is positive, respectively negative, with positive probability. Indeed, using that $W_2$ is obtained as the limit of a martingale of the form $a_n^{-1}\sum_{j=1}^n (v_2(X_j) + v_2(\star))$, we observe that
$$\E_\ell(W_2|X_1,\ldots,X_n) \propto v_2(\ell) + \sum_{j=1}^n v_2(X_j) + (n+1) v_2(\star),$$
can be positive or negative depending on the values of $X_1,\ldots,X_n$. Therefore, assuming that $\lambda_2- (1-q)m_\GW \neq 0$, the reinforced perpetuity  converges, respectively diverges, with positive probability. In particular the martingale $M$  converges to a non-degenerate random variable, with positive probability under law $\mathcal{P}_{\delta_\ell}$,  while being non-uniformly integrable.

In the light case when $\lambda_1=1$ and $\lambda_2<1/2$, $n^{1/2}(m_{\boldsymbol{\tau}_n}-1)$ rather converges in distribution to some centered Gaussian variable.
This suggests that in most (if not all) cases, one should have $\limsup_{n\to\infty} \Phi(\varsigma(n))=\infty$.
This would imply that the terminal value $M_{\infty}=\infty$, $\hat{\mathcal P}_{\delta_\ell}$-a.s., and therefore also $M_{\infty}=0$,  ${\mathcal P}_{\delta_\ell}$-a.s.

The possible behaviors of critical reinforced perpetuity hence appear richer than for regular perpetuities. This also suggests that there exist reinforced branching processes such that \eqref{eqn:subcritical} holds but the process survives with positive probability.

\paragraph{Acknowledgements.}
The authors wish to thank the anonymous referees for their comments that helped improved previous versions of this article.

\end{document}